\newtheorem{theorem}{Theorem}
\newtheorem*{theorem*}{Theorem}
\newtheorem{proposition}[theorem]{Proposition}
\newtheorem{corollary}[theorem]{Corollary}
\newtheorem{definition}[theorem]{Definition}
\newtheorem{lemma}[theorem]{Lemma}
\newtheorem{remark}[theorem]{Remark}
\newtheorem*{claim*}{Claim}
\newcommand{\Q}{\mathbb{Q}}
\newcommand{\R}{\mathbb{R}}
\newcommand{\ie}{{\it{i.$\,$e.\ }}}
\DeclareMathOperator{\sign}{sign} 
\DeclareMathOperator{\s}{s} 
\newcommand{\sing}{\left[\substack{ \\ \oplus}\right]}
\newcommand{\hdoub}{\left[\substack{ \\- \oplus}\right]}
\newcommand{\vdoub}{\left[\substack{ - \\ \oplus}\right]}
\newcommand{\iquad}{\left[\substack{+ - \\- \oplus}\right]}
\newcommand{\tquad}{\ensuremath{\left[\substack{\oplus - \\- +}\right]}}
\newcommand{\ndquad}{\left[\substack{+ - \\\ominus +}\right]}
\newcommand{\nuquad}{\ensuremath{\left[\substack{+ \ominus \\- +}\right]}}
\newcommand{\nsing}{\left[\substack{ \\ \ominus}\right]}
\newcommand{\nhdoub}{\left[\substack{ \\ \ominus +}\right]}
\newcommand{\nvdoub}{\left[\substack{ \ominus \\ +}\right]}
\newcommand{\trip}{\ensuremath{\left[\substack{\oplus - \\- \hspace{2mm}}\right]}}
\newcommand{\dtrip}{\ensuremath{\left[\substack{+ - \\ \ominus \hspace{2mm}}\right]}}
\newcommand{\utrip}{\ensuremath{\left[\substack{+ \ominus \\- \hspace{2mm}}\right]}}
\title{A Refinement of Pohst's Inequality}
\author{Gabriel Raposo}
\address{Department of Statistics, University of California-Berkeley, \newline 367 Evans Hall, Berkeley, CA 94720, USA}
\email{raposo@berkeley.edu}
\newenvironment{breakablealgorithm}
  {
   \begin{center}
     \refstepcounter{algorithm}
     \hrule height.8pt depth0pt \kern2pt
     \renewcommand{\caption}[2][\relax]{
       {\raggedright\textbf{\ALG@name~\thealgorithm} ##2\par}%
       \ifx\relax##1\relax 
         \addcontentsline{loa}{algorithm}{\protect\numberline{\thealgorithm}##2}%
       \else 
         \addcontentsline{loa}{algorithm}{\protect\numberline{\thealgorithm}##1}%
       \fi
       \kern2pt\hrule\kern2pt
     }
  }{
     \kern2pt\hrule\relax
   \end{center}
  }
\begin{document}
\begin{abstract}

We generalize an inequality conjectured by Pohst in 1977 and recently proved by the author and independently by Battistoni and Molteni. This new inequality improves a bound for the regulator in terms of the discriminant for totally real number fields by taking into account the signs of conjugates of a minimal unit. We give a new interpretation to the problem and exploit the combinatorial method used by Pohst.
\end{abstract}

	\maketitle
\section{Introduction}
In 1952 Remak \cite{Re} proved that the product $\prod_{1\leq i \leq j \leq n} \big|1-\prod_{k=i}^j x_k\big| $ is bounded above by $(n+1)^{(n+1)/2}$ when the variables $x_i$ are complex numbers with modulus $|x_i|\leq 1$. In 1977 Pohst \cite{Po} proved for $n \leq 10$ that the same product is bounded above by $2^{\lfloor \frac{n+1}{2}\rfloor}$ when the variables are real with absolute value $|x_i|\leq 1$. He gave a computer-assisted proof by using some elementary inequalities (see lemma \ref{Pohst}) and factorizing the product for each possible combination of signs for the variables $x_i$. 

In 1996 Bertin \cite{Be} attempted to give a proof for all $n$, unfortunately, the proof turned out to be incomplete. Recently the author \cite{Ra} and independently Battistoni and Molteni \cite{BM} found a proof for all $n$. In this article, we obtain a refinement of this inequality and obtain Pohst's inequality as a corollary. In particular, we prove that the factorization attempted computationally by Pohst in 1977 is possible.

\begin{theorem}\label{Theorem1}
Let $\{y_i\}_{i=1}^{n+1} \subset \R\backslash\{0\}$ such that $|y_i|\leq |y_{i+1}|$ for $i=1,\dots,n$. Let $$P_{n+1}(y_1,\dots,y_{n+1})=\prod_{1\leq i<j\leq n+1}\Big(1-\dfrac{y_i}{y_{j}}\Big),$$ 
let $p$ be the number of positive variables $y_i$ and $m=n+1-p$ be the number of negative variables, then $$P_{n+1}(y_1,\dots,y_{n+1})\leq 2^{\min(p,m)}.$$ 
\end{theorem}

A reformulation of the theorem via a change of variable (see section \ref{Section3}) gives

\begin{theorem}\label{Theorem2}
Let $\vec{x}=(x_1,\dots,x_{n})\in [-1,1]^{n}$, $x_i\neq 0$. Let 
$$f_n(\vec{x})=\prod_{1\leq i \leq j \leq n} (1-\prod_{k=i}^j x_k),$$ 
let $\alpha$ be the number positive values on the sequence $\{\prod_{k=1}^i x_i\}_{i=1}^{n}$ and let $\beta=n-\alpha$, then $$f_n(\vec{x}) \leq 2^{\min(\alpha+1,\beta)}.$$
\end{theorem}

Noticing that $\min(\alpha+1,\beta)\leq \lfloor \frac{n+1}{2}\rfloor$, we obtain

\begin{corollary}[Pohst's inequality]
Following the notations of theorem \ref{Theorem2}, then $$f_n(\vec{x}) \leq 2^{\lfloor \frac{n+1}{2}\rfloor}.$$
\end{corollary}

The number theoretic motivation is to improve the lower bound for the regulator $R_K$ in terms of the discriminant $D_K$ for number fields $K$. 

\begin{definition}
Let $K$ be a number field, we will say that $\varepsilon$ is a \textbf{minimal unit} if it is a unit with minimal non-zero length in the logarithmic lattice. That is,
$$m_K(\varepsilon) = \sum_{\omega} (\log||\varepsilon||_\omega)^2$$
is minimized among units with non-zero length. Here $\omega$ runs over the set of archimedean places of $K$ and $||\cdot||_\omega$ denotes the corresponding absolute value, normalized so that $|\text{Norm}_{K/\Q}(a)| = \prod_{\omega \in \infty_K} ||a||_\omega$ .
\end{definition}

\begin{corollary}\label{CorolNumbTheory}
Let $K$ a totally real primitive number field, $R_K$ its regulator and $D_K$ its discriminant, then
$$
\log|D_K|\leq \min(p,m) \log(4)+\sqrt{\gamma_{n-1}(n^3-n)/3}\,(\sqrt{n}R_K)^{1/(n-1)},
$$
where $n:=[K:\Q])$, $\gamma_{n-1}$ is Hermite's constant in dimension $n-1$, p is the number of positive conjugates of a minimal unit $\varepsilon$ and $m=n-p$.
\end{corollary}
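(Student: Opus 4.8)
The plan is to run the classical Remak--Friedman geometry-of-numbers argument \cite{Re,Fr}, replacing the crude estimate on $\prod_{i<j}\bigl|1-\varepsilon^{(i)}/\varepsilon^{(j)}\bigr|$ by our Theorem. Write $n=[k:\Q]$; since $k$ is totally real it has unit rank $n-1$, and the logarithmic embedding
$$L\colon\mathcal{O}_k^{\times}\longrightarrow\R^{n},\qquad \varepsilon\longmapsto\bigl(\log|\varepsilon^{(1)}|,\dots,\log|\varepsilon^{(n)}|\bigr)$$
sends $\mathcal{O}_k^{\times}/\{\pm1\}$ isomorphically onto a full lattice $\Lambda$ of rank $n-1$ inside the hyperplane $H=\{x\in\R^{n}:\sum_i x_i=0\}$, whose covolume for the Euclidean metric induced from $\R^{n}$ equals $\sqrt{n}\,R_k$ (the standard normalization identity, obtained by completing $L$ with the unit normal $n^{-1/2}(1,\dots,1)$ to $H$ and expanding the determinant). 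By Hermite's inequality applied to $\Lambda\subset H\cong\R^{n-1}$ there is a unit $\varepsilon$ with $L(\varepsilon)\neq 0$ and
$$\sum_{i=1}^{n}\bigl(\log|\varepsilon^{(i)}|\bigr)^{2}\ \le\ \gamma_{n-1}\,\bigl(\sqrt{n}\,R_k\bigr)^{2/(n-1)}.$$
Since $L(\varepsilon)\neq 0$, $\varepsilon$ is not a torsion unit, so $\varepsilon\neq\pm1$ and hence $\varepsilon\notin\Q$; as $k$ is primitive this forces $\Q(\varepsilon)=k$. Therefore $\Z[\varepsilon]\subseteq\mathcal{O}_k$ is a finite-index subring, so $|D_k|\le|\mathrm{disc}(\varepsilon)|$ and
$$\log|D_k|\ \le\ \log|\mathrm{disc}(\varepsilon)|\ =\ 2\sum_{1\le i<j\le n}\log\bigl|\varepsilon^{(i)}-\varepsilon^{(j)}\bigr|.$$

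Next I would reorder the embeddings so that $|\varepsilon^{(1)}|\le\cdots\le|\varepsilon^{(n)}|$ and factor $\varepsilon^{(i)}-\varepsilon^{(j)}=\varepsilon^{(j)}\bigl(1-\varepsilon^{(i)}/\varepsilon^{(j)}\bigr)$ for $i<j$, which gives
$$\tfrac12\log|D_k|\ \le\ \sum_{j=2}^{n}(j-1)\log|\varepsilon^{(j)}|\ +\ \log P_n\bigl(\varepsilon^{(1)},\dots,\varepsilon^{(n)}\bigr).$$
The Theorem bounds the last term by $\min\{p,m\}\log 2$, where $p$ and $m=n-p$ are the numbers of positive and negative conjugates of $\varepsilon$. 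For the remaining sum put $a_j=\log|\varepsilon^{(j)}|$, so that $a_1\le\cdots\le a_n$ and $\sum_j a_j=\log|N_{k/\Q}(\varepsilon)|=0$; hence $\sum_{j=2}^{n}(j-1)a_j=\sum_{j=1}^{n}j\,a_j=\sum_{j=1}^{n}\bigl(j-\tfrac{n+1}{2}\bigr)a_j$, and by Cauchy--Schwarz together with the elementary identity $\sum_{j=1}^{n}\bigl(j-\tfrac{n+1}{2}\bigr)^{2}=\tfrac{n^{3}-n}{12}$,
$$\sum_{j=2}^{n}(j-1)a_j\ \le\ \Bigl(\tfrac{n^{3}-n}{12}\Bigr)^{1/2}\Bigl(\sum_j a_j^{2}\Bigr)^{1/2}\ \le\ \Bigl(\tfrac{\gamma_{n-1}(n^{3}-n)}{12}\Bigr)^{1/2}\bigl(\sqrt{n}\,R_k\bigr)^{1/(n-1)}.$$
Adding the two bounds and multiplying by $2$ turns $2\min\{p,m\}\log 2$ into $\min\{p,m\}\log 4$ and $2\sqrt{\gamma_{n-1}(n^{3}-n)/12}$ into $\sqrt{\gamma_{n-1}(n^{3}-n)/3}$, which is exactly the asserted inequality; since $\min\{p,m\}\le\lfloor n/2\rfloor$, this refines \cite[(3.15)]{Fr}.

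I expect the one genuinely delicate point to be that the unit $\varepsilon$ produced by Hermite's inequality need not satisfy the strict hypothesis $|y_i|<|y_{i+1}|$ of the Theorem, because two of its conjugates may have equal absolute value. Since $\varepsilon$ generates $k$ its conjugates are distinct, so $\varepsilon^{(i)}=\varepsilon^{(j)}$ is impossible and such a coincidence can only be $\varepsilon^{(i)}=-\varepsilon^{(j)}$, a point at which $P_n$ is still finite and strictly positive; the bound $P_n\bigl(\varepsilon^{(1)},\dots,\varepsilon^{(n)}\bigr)\le 2^{\min\{p,m\}}$ is then recovered by approximating the configuration with strictly ordered real tuples having the same signs (so that $\min\{p,m\}$ is unchanged) and invoking the continuity of $P_n$ on the locus where no coordinate vanishes. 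Everything else is routine: the covolume normalization $\operatorname{covol}(\Lambda)=\sqrt{n}\,R_k$, Hermite's inequality, the factorization $\mathrm{disc}(\varepsilon)=\prod_{i<j}(\varepsilon^{(i)}-\varepsilon^{(j)})^2$, and the variance computation $\sum_{j=1}^{n}\bigl(j-\tfrac{n+1}{2}\bigr)^2=\tfrac{n^{3}-n}{12}$.
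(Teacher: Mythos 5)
Your argument is correct and is exactly the standard Remak--Pohst--Friedman derivation that the paper intends (the paper states this corollary without proof, deferring to \cite[(3.15)]{Fr} with the new bound $P_n\le 2^{\min\{p,m\}}$ substituted for $2^{\lfloor n/2\rfloor}$): covolume $\sqrt{n}R_k$, Hermite in rank $n-1$, primitivity to force $\Q(\varepsilon)=k$ and $|D_k|\le|\mathrm{disc}(\varepsilon)|$, the factorization into $P_n$, and Cauchy--Schwarz with $\sum_j\bigl(j-\tfrac{n+1}{2}\bigr)^2=\tfrac{n^3-n}{12}$ all check out, and the constants recombine correctly to $\log 4$ and $\sqrt{\gamma_{n-1}(n^3-n)/3}$. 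Your explicit limiting argument for the degenerate case $\varepsilon^{(i)}=-\varepsilon^{(j)}$ (where the strict ordering $|y_i|<|y_{i+1}|$ of the Theorem fails) is a point the paper glosses over, and it is handled properly.
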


The idea of the proof is to show that Pohst's algorithm \cite[Page 467]{Po} can find a factorization of the product such that we can apply lemma \ref{Pohst} below. In previous works \cite{Ra} \cite{BM} the existence of such a partition is proven for a subset of the factors that we call {\it{non-canonical}} (called {\it{wrong}} in \cite{BM}). In this article, we construct a partition for all factors. This also extends the methods employed in \cite{FR} where all possible sign configurations are taken into account to obtain better bounds when exactly two of the variables $y_i$ are complex conjugates. We hope these methods could shed some light on Battistoni's conjectures \cite{Ba}. See \cite{BM2} for recent progress in that direction. 

A heuristic approach to this problem consists in interpreting the variables $y_i$ as particles with either positive or negative charge ordered in the real line (see figure \ref{Heuristic}) while the factors $(1-y_i/y_j)$ are potentials that maximize when $|y_i|$ is close to $|y_j|$ with opposite charge and minimize when they have the same charge. This interpretation suggests that to maximize the product $\prod_{1\leq i < j \leq n} \big(1-y_i/y_j\big)$ we would need to pair each positive particle with a negative particle and should expect this to happen exactly $\min(p,m)$ times, our theorem proves that this is the case. 
\begin{figure}[h!] 
    \centering
\includegraphics[scale=0.25]{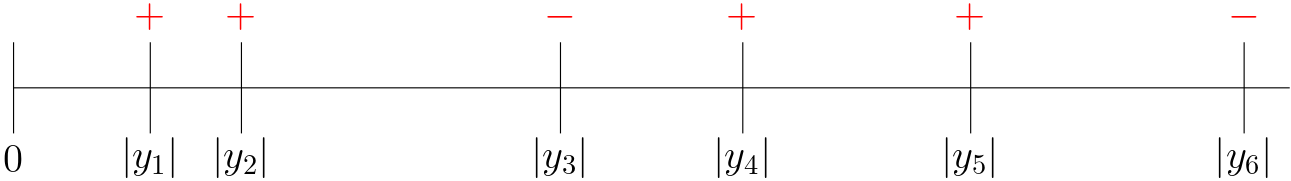}\\
\caption{Illustration of heuristic approach. }
        \label{Heuristic} 
\end{figure}

At the end of the article we propose a further improvement to the inequality motivated by this interpretation. However, note that the method employed in this paper exploits as far as possible the combinatorial tools introduced by Pohst. Any further improvement to find the maximum of these products would need more tools besides the basic four Pohst's inequalities of lemma \ref{Pohst}.

\section{Proof of corollary \ref{CorolNumbTheory} with examples}

We refer to \cite{ADF}[Section 2] for proofs of the following two propositions.

\begin{proposition}\label{Prop1}
    Let $K = \Q(\varepsilon)$ be a field of degree $n$ and signature $(r_1, r_2)$ with $\varepsilon \in \mathcal{O}_K^*$ and discriminant $D_K$. Let $m_K(\varepsilon)$ be the length of $\varepsilon$ in the logarithmic lattice of the units of $K$. Let $\varepsilon_1, . . . , \varepsilon_n$ be the conjugates of $\varepsilon$ ordered in increasing absolute value. Then
$$\log |D_K | \leq 2 \log(P_{n} (\varepsilon_1,\dots, \varepsilon_n)) + m_K(\varepsilon)\cdot \sqrt{\frac{n^3-n-4r_2^3-2r_2}{3}}.$$
    
\end{proposition}

\begin{proposition} \label{Prop2}
Assume $K = \Q(\varepsilon)$ as above and that $\varepsilon$ is a minimal unit with length $m_K(\varepsilon)$. Let $r=r_1+r_2-1$, then
$$ m_K(\varepsilon)\leq \big( \sqrt{r+1} R_K \gamma_r^{r/2}\big)^{1/r},$$
where $\gamma_j$ is the Hermite constant of dimension $j$ and $R_K$ is the regulator of $K$. 
\end{proposition}

\begin{proof}[Proof of corollary \ref{CorolNumbTheory}]
Starting from proposition \ref{Prop1}, in this setting $K$ is a totally real primitive field, hence $r_1=n$, $r_2=0$ and $r=n-1$.
We apply both theorem $1$ to bound the term $P_{n} (\varepsilon_1,\dots, \varepsilon_n)$ and proposition \ref{Prop2} to bound $m_K(\varepsilon)$.
\end{proof}

It is natural to wonder what the behavior of $\min(p, m)$ is for a minimal unit $\varepsilon$. While it is computationally challenging to find a fundamental system of units and to find a shortest length vector in the logarithmic lattice, a low degree setting provides a toy model where we can study the frequency of $\min(p, m)$.\begin{table}[h]
    \begin{minipage}{.5\linewidth}
      \centering
        \begin{tabular}{|c|c|c|c|}
        \hline
$D_k$  & $P(x)$ & $u$ & $\min(p,m)$     \\
\hline
$ 5 $ & $x^2 - x - 1 $ & $ a $ & $ 1 $ \\
$ 8 $ & $x^2 - 2 $ & $ a + 1 $ & $ 1 $ \\
$ 12 $ & $x^2 - 3 $ & $ a - 2 $ & $ 0 $ \\
$ 13 $ & $x^2 - x - 3 $ & $ a + 1 $ & $ 1 $ \\
$ 17 $ & $x^2 - x - 4 $ & $ 2a + 3 $ & $ 1 $ \\
\hline
\end{tabular}
    \end{minipage}%
    \begin{minipage}{.5\linewidth}
      \centering
        \begin{tabular}{|c|c|c|c|}
        \hline
$D_k$  & $P(x)$ & $u$ & $\min(p,m)$     \\
\hline
$ 21 $ & $x^2 - x - 5 $ & $ a + 2 $ & $ 0 $ \\
$ 24 $ & $x^2 - 6 $ & $ 2a - 5 $ & $ 0 $ \\
$ 28 $ & $x^2 - 7 $ & $ 3a - 8 $ & $ 0 $ \\
$ 29 $ & $x^2 - x - 7 $ & $ a + 2 $ & $ 1 $ \\
$ 33 $ & $x^2 - x - 8 $ & $ 8a + 19 $ & $ 0 $ \\
\hline
\end{tabular}
    \end{minipage} 
    \caption{Minimal units for totally real quadratic fields.} \label{Table11}
\end{table}

The quadratic case is particularly well behaved since a fundamental unit is also a minimal unit. In this setting there are two possibilities, either both conjugates of the minimal unit have same sign (hence $\min(p, m)=0$) or they have opposite sign (hence $\min(p, m)=1$), this can be checked by computing the norm of a fundamental unit. Over all $303\,957$ totally real quadratic number fields with discriminant smaller than $10^6$, a total of $228\,045$ (Around 75.02\%) have the conjugates of a minimal unit with same sign. For easier visualization, we provide a table (see table \ref{Table11}) with the first $10$ totally real quadratic number fields with smallest discriminant, $D_K$ denotes the discriminant, $P(x)$ is the defining polynomial, $u$ is a minimal unit and $a$ is a field generator.

To perform the computation we used SageMath (see \cite{Sa}) to enumerate totally real quadratic fields by discriminant, compute a fundamental unit $u$ and compute the minimal polynomial $m_u(x)$ of $u$. Note that the constant term of such a minimal polynomial $m_u(0)$ is either $1$ or $-1$. If it is $1$ then both conjugates have the same sign and $\min(p,m)=0$, in the other case $\min(p,m)=1$. 

\section{Proof of the theorems}\label{Section3}

We start by proving that theorem \ref{Theorem1} and \ref{Theorem2} are equivalent.

\begin{proof}[Proof of equivalence between theorems \ref{Theorem1} and \ref{Theorem2}] We prove each direction separately. 
\begin{itemize}
    \item \textbf{Theorem \ref{Theorem1} implies theorem \ref{Theorem2}.} Take $y_1$ to be positive and let  $x_i=y_i/y_{i+1}$ for $i=1,\dots,n$. Then $\alpha$ and $\beta$ be the number positive and respectively negative terms in the sequence $\{\prod_{k=1}^i x_i\}_{i=1}^{n}$. Via this change of variables we get $P_{n+1}(y_1,\dots,y_{n+1})=f_n(\vec{x})$, $p=\alpha+1$ and $m=\beta$.
    \item \textbf{Theorem \ref{Theorem2} implies theorem \ref{Theorem1}.} Take $x_i=y_i/y_{i+1}$ for $i=1,\dots,n$. Here $\alpha$ is counting the number of variables $y_2,\dots, y_{n+1}$ with same sign as $y_1$ while $\beta$ is counting the number of variables with opposite sign to $y_1$. If $y_1$ is positive we get $p=\alpha+1$ and $m=\beta$, if $y_1$ is negative we get $p=\beta$ and $m=\alpha+1$. In both cases we obtain the desired inequality.\qedhere
\end{itemize}
\end{proof}

We devote the rest of the section to prove theorem \ref{Theorem2} using Pohst's original inequalities.

\begin{lemma}\label{Pohst}
(Pohst)
\begin{enumerate}
    \item If $a \in [-1,1]$, then $(1-a) \leq 2$.
    \item If $a \in [0,1]$ and $b \in [-1,0]$, then $(1-a)(1-ab) \leq 1$.
    \item If $a,b \in [-1,1]$, then $(1-a)(1-b)(1-ab) \leq 2$.
    \item If $a \in [0,1]$ and $b,c \in [-1,0]$, then $(1-a)(1-ab)(1-ac)(1-abc)\leq 1$.
\end{enumerate}
\end{lemma}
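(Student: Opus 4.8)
All four inequalities are elementary, and the plan is to prove each directly (rather than invoking Pohst's computer check). Part~(1) is immediate: $a\ge-1$ gives $1-a\le2$. For part~(2), I substitute $b=-b'$ with $b'\in[0,1]$ and rewrite
\[
(1-a)(1-ab)=(1-a)(1+ab')=1-a\bigl(1-b'(1-a)\bigr);
\]
since $1-a$ and $b'$ lie in $[0,1]$, so does $b'(1-a)$, hence the inner bracket lies in $[0,1]$, and as $a\ge0$ the product is at most~$1$.

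For the two genuinely two-variable cases~(3) and~(4) the plan is the same: replace the two ``free'' variables by their sum $s$ and product $p$, note that the product is then \emph{affine} in $s$ once $p$ is fixed, and pin $s$ down with the trivial bounds coming from nonnegativity of expressions $(1\pm\cdot)(1\pm\cdot)$. For~(3), set $s=a+b$, $p=ab$; a short expansion gives
\[
(1-a)(1-b)(1-ab)=(1-s+p)(1-p)=1-s(1-p)-p^{2}.
\]
Since $1-p=1-ab\ge0$ this is nonincreasing in $s$, and $(1+a)(1+b)=1+s+p\ge0$ forces $s\ge-1-p$; plugging in this lower bound gives $1+(1-p^{2})-p^{2}=2-2p^{2}\le2$, with equality exactly at $p=0$, $s=-1$, i.e.\ $\{a,b\}=\{-1,0\}$, which matches the extremal configuration of the main theorem.

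For~(4), I write $b=-\beta$, $c=-\gamma$ with $\beta,\gamma\in[0,1]$ and set $q=\beta\gamma$, $s=\beta+\gamma$, reducing the claim to $(1-a)(1+a\beta)(1+a\gamma)(1-aq)\le1$. This product is linear in each of $\beta,\gamma$ separately, hence of the form $A+Bs+Cq$; multiplying out, one finds that $(1-a)(1+a\beta)(1+a\gamma)(1-aq)-1$ equals $a$ times a quantity that is affine in $s$ with slope $(1-a)(1-aq)\ge0$. So it is largest at the largest admissible $s=1+q$ (the bound $(1-\beta)(1-\gamma)=1-s+q\ge0$), where it evaluates to $-a^{2}\bigl(1+q^{2}(1-a^{2})\bigr)\le0$ because $a\le1$; hence the product is at most~$1$ throughout.

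The only real decision here is the bookkeeping: once the free variables are traded for their sum and product, everything runs on nonnegativity of products $(1\pm a)(1\pm b)$ together with, in~(4), a single mechanical expansion --- no calculus, no critical-point hunting, no casework on which variable sits at an endpoint. The expansion in~(4) is the most tedious step, and the point worth highlighting is that its outcome is monotone in $s$, which is exactly what reduces a two-variable problem to a one-line scalar check.
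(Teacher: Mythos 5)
Your proof is correct, and I checked the computations: in (2) the identity $(1-a)(1+ab')=1-a\bigl(1-b'(1-a)\bigr)$ holds; in (3) the expansion $(1-s+p)(1-p)=1-s(1-p)-p^2$ and the bound $s\ge -1-p$ from $(1+a)(1+b)\ge0$ give $2-2p^2\le2$; and in (4) the $s$-coefficient of $P-1$ is indeed $a(1-a)(1-aq)\ge0$, and substituting $s=1+q$ yields $P-1=-a^{2}\bigl(1+q^{2}(1-a^{2})\bigr)\le0$. The paper itself offers no argument here --- it simply cites Pohst's original computation and remarks that ``undergraduate calculus'' suffices --- so your route is genuinely different: a calculus-free, purely algebraic reduction via the elementary symmetric functions $s$ and $p$ (resp.\ $q$), using only nonnegativity of products of the form $(1\pm a)(1\pm b)$ and monotonicity in $s$. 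What this buys is a uniform, verifiable two-line argument for each case in place of endpoint casework or critical-point analysis, and it also exhibits the equality cases cleanly. One wording slip worth fixing in (4): the four-factor product is \emph{not} linear in $\beta$ separately (the variable $\beta$ occurs in both $(1+a\beta)$ and $(1-a\beta\gamma)$, so the product is quadratic in $\beta$), and consequently it is not of the form $A+Bs+Cq$ with constant coefficients --- the expansion also contains $sq$ and $q^{2}$ terms. This does not damage the argument, since all you actually use is that $P-1$ is affine in $s$ \emph{for fixed $q$} with the stated nonnegative slope, which is true; but you should either restrict the linearity remark to the factor $(1+a\beta)(1+a\gamma)=1+as+a^{2}q$ or simply delete it and let the explicit expansion carry the claim.
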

\begin{proof}
See \cite[p. 468]{Po}.
\end{proof}

\begin{definition}
Let $\vec{x}=(x_1,\dots,x_n)\in \big([-1,1]\backslash\{0\}\big)^n$ and $a_{\vec{x}(i,j)}:=1-\prod_{k=i}^j x_{k}$. Define the \textbf{product sign} $s$ of $a_{\vec{x}(i,j)}$ by $$s(a_{\vec{x}(i,j)}):= \sign(1-a_{\vec{x}(i,j)}) = \prod_{k=i}^j \sign(x_{k}).$$ 
\begin{enumerate}
    \item We say that $a_{\vec{x}(i,j)}=1-\prod_{k=i}^j x_k$ is \textbf{canonical} if $s(a_{\vec{x}(i,j)})=(-1)^{i+j+1}$.
    \item We say that $a_{\vec{x}(i,j)}=1-\prod_{k=i}^j x_k$ is \textbf{non canonical} if $s(a_{\vec{x}(i,j)})=(-1)^{i+j}$.
    \item Define the set $K_{\vec{x}}$ of \textbf{canonical indices} of $\vec{x}$, $$K_{\vec{x}}:=\{(i,j) : s(a_{\vec{x}(i,j)}) = (-1)^{i+j+1}\}.$$
    \item Define the set $J_{\vec{x}}$ of\textbf{ non canonical indices} of $\vec{x}$, $$J_{\vec{x}}:=\{(i,j) : s(a_{\vec{x}(i,j)}) = (-1)^{i+j}\}.$$
\end{enumerate}
\end{definition}

The terms $a_{\vec{x}(i,j)}$ and the indices $(i,j)$ can be ordered in a triangular way (see figure \ref{figure4}). We will partition  $K_{\vec{x}}$ and $J_{\vec{x}}$ into subsets of 1, 2, 3, or 4 elements so that we can apply the corresponding products cases (1), (2), (3), or (4) respectively of lemma \ref{Pohst}. 

\begin{definition} \label{defpartition}
Let $\vec{x}=(x_1,\dots,x_n)\in \big([-1,1]\backslash\{0\}\big)^n$ and let $\pi_{\vec{x}}$ be a set partition of $J_{\vec{x}}$. We say that $\pi_{\vec{x}}$ is a \textbf{good partition} if for each $p \in \pi_{\vec{x}}$ then one of the following holds.
\begin{enumerate}
    \item $p=\{(i,j)\}$ with $s(a_{\vec{x}(i,j)})=1$.
    \item $p=\{(i,j),(i',j')\},\, s(a_{\vec{x}(i,j)})=1, \, s(a_{\vec{x}(i',j')})=-1$ and either $i'\leq i$ and $j=j'$, or $i'=i$ and $j\leq j'$. See figures \ref{picturedefpart} (A) and \ref{picturedefpart} (B).
    \item $p=\{(i,j),(i-l,j),(i,j+l'),(i-l,j+l')\}, \, s(a_{\vec{x}(i,j)})= s(a_{\vec{x}(i-l,j+l')}) =1 , \, s(a_{\vec{x}(i-l,j)})=s(a_{\vec{x}(i,j+l')})=-1$, and $l,l'\geq 1$. See figure \ref{picturedefpart} (C).
\end{enumerate}
\end{definition}
\vspace{-1cm}
\begin{figure}[h]
    \centering
    \begin{subfigure}[t]{0.45\textwidth}
    \begin{tikzpicture}
    \matrix [matrix of math nodes] (m)
        {
           & \hspace{1.5cm} & (i',j)_{-} &\hdots &(i,j)_{+}   \\
           & &\hspace{1mm} & \\
           & &\hspace{1mm} & \\
        };  
\end{tikzpicture} \caption{Configuration \textit{(2)} horizontally.}   
    \label{Conf2h}
    \end{subfigure}
    \hfill \hfill
    \begin{subfigure}[t]{0.45\textwidth}
        \centering
    \begin{tikzpicture}
    \matrix [matrix of math nodes] (m)
        {
             &  &(i,j')_{-} &  &    \\
            &  & \vdots & &  \\
            &  & (i,j)_{+} & & \\
        };  
\end{tikzpicture}    \caption{Configuration \textit{(2)} vertically.}
    \label{Conf2v}
    \end{subfigure}
    \vspace{0.2cm}

    \centering
    \begin{subfigure}[t]{0.45\textwidth}
        \centering
     \begin{tikzpicture}
    \matrix [matrix of math nodes] (m)
        {
           (i-l,j+l')_+ & \hdots & (i,j+l')_-\\
           \vdots & & \vdots\\
           (i-l,j)_- & \hdots & (i,j)_+\\
        };  
\end{tikzpicture}    \caption{Configuration \textit{(3)}.}
    \label{Conf3}
    \end{subfigure}
    \hfill
   \begin{subfigure}[t]{0.45\textwidth}
    \centering
    \begin{tikzpicture}
    \matrix [matrix of math nodes] (m)
        {
           (i,j)_+ & \hdots & (i+l,j)_-\\
           \vdots & & \\
           (i,i-l-1)_- &  & \\
        };  
\end{tikzpicture} \caption{Configuration \textit{(5)}.}   
    \label{Conf5}
    \end{subfigure}
    
    \caption{Configurations from definitions \ref{defpartition} and \ref{defpartition2}.}
        \label{picturedefpart}
\end{figure}

\begin{definition} \label{defpartition2}
Let $\vec{x}=(x_1,\dots,x_n)\in \big([-1,1]\backslash\{0\}\big)^n$ and let $\eta_{\vec{x}}$ be a set partition of $K_{\vec{x}}$. We say that $\eta_{\vec{x}}$ is a \textbf{good partition} if for each $p \in \eta_{\vec{x}}$ then one of the following holds.
\begin{enumerate}
    \item\hspace{-2mm}, (2), or (3) as in definition \ref{defpartition}.
   \setcounter{enumi}{3}
    \item $p=\{(i,j)\}$ with $s(a_{\vec{x}(i,j)})=-1$.
    \item $p=\{(i,j),(i+l,j),(i,i+l-1)\}, \, s(a_{\vec{x}(i,j)})=1 , \, s(a_{\vec{x}(i+l,j)})=s(a_{\vec{x}(i,i+l-1)})=-1$, and $l\geq 1$. See figure \ref{picturedefpart} (D).
\end{enumerate}
We additionally require that the number of elements $p$ in $\eta_{\vec{x}}$ falling in points (4) or (5) of the definition is equal to $\min(\alpha+1,\beta)$.
\end{definition}

Notice that $p$ is a singleton in item {\it{(1)}} of definition \ref{defpartition} or item {\it{(4)}} of definition \ref{defpartition2}. Each $p$ as in definitions \ref{defpartition} and \ref{defpartition2} can be represented graphically, see figure \ref{picturedefpart}.

\begin{lemma}\label{Inequalities}
Let $\vec{x}=(x_1,\dots,x_n)\in \big([-1,1]\backslash\{0\}\big)^n$, let let $\pi_{\vec{x}}$ be a good partition of $J_{\vec{x}}$ and let $\eta_{\vec{x}}$ be a good partition of $K_{\vec{x}}$. Let $p\in \pi_{\vec{x}}$ or $p \in \eta_{\vec{x}}$.
\begin{enumerate}
    \item If $p$ is as in (1), (2) or (3) of definitions \ref{defpartition} or \ref{defpartition2}, then 
    $\prod_{(i,j) \in p} a_{\vec{x}(i,j)} \leq 1.$
    \item if $p$ is as in (4) or (5) of definition \ref{defpartition2}, then 
    $\prod_{(i,j) \in p} a_{\vec{x}(i,j)} \leq 2.$
\end{enumerate}
\end{lemma}
\begin{proof}
We use the inequalities provided by lemma \ref{Pohst}. We will go case by case.
\begin{enumerate}
    \item $\prod_{(i,j) \in p} a_{\vec{x}(i,j)} = 1-\prod_{k=i}^j x_{k}$. Since $\prod_{k=i}^j x_{k}>0$, we obtain the desired bound. 
    \item $\prod_{(i,j) \in p} a_{\vec{x}(i,j)} = (1-\prod_{k=i}^j x_k)(1-\prod_{k=i-l}^j x_k)$ or $\prod_{(i,j) \in p} a_{\vec{x}(i,j)} = (1-\prod_{k=i}^j x_k)(1-\prod_{k=i}^{j+l'} x_k)$ for some index $(i,j)$ and $l,l'>0$. In each case, inequality (2) of lemma \ref{Pohst} with $a= \prod_{k=i}^j x_k$ and $b=\prod_{k=i-l}^{i-1} x_k$ or $b=\prod_{k=j+1}^{j+l'} x_k$ respectively provides the desired bound. 
    \item $\prod_{(i,j) \in p} a_{\vec{x}(i,j)} = (1-\prod_{k=i}^j x_k)(1-\prod_{k=i-l}^j x_k)(1-\prod_{k=i}^{j+l'} x_k)(1-\prod_{k=i-l}^{j+l'} x_k)$ for some index $(i,j)$ and $l,l'>0$. Inequality (4) of lemma \ref{Pohst} with $a= \prod_{k=i}^j x_k$, $b=\prod_{k=i-l}^{i-1} x_k$ and $c=\prod_{k=j+1}^{j+l'} x_k$ provides the desired bound. 
    \item $\prod_{(i,j) \in p} a_{\vec{x}(i,j)} = 1-\prod_{k=i}^j x_{k}$. Inequality (1) of lemma \ref{Pohst} with $a = \prod_{k=i}^j x_{k}$  provides the desired bound. 
    \item $\prod_{(i,j) \in p} a_{\vec{x}(i,j)} = (1-\prod_{k=i}^j x_{k})(1-\prod_{k=i+l}^j x_{k})(1-\prod_{k=i}^{i+l-1} x_{k})$ for some index $(i,j)$ and $l>0$. Inequality (3) of lemma \ref{Pohst} with $a=\prod_{k=i}^{i+l-1} x_{k}$ and $b=\prod_{k=i+l}^{j} x_{k}$ provides the desired bound. \qedhere
\end{enumerate}
\end{proof}

\begin{lemma}\label{partition}
For every $\vec{x}=(x_1,\dots,x_n) \in \big([-1,1]\backslash\{0\}\big)^n$ the set $J_{\vec{x}}$ has a good partition.
\end{lemma}
\begin{proof}
A full proof can be found in \cite[Lemma 8]{Ra}.
\end{proof}

\begin{lemma}\label{partition2}
For every $\vec{x}=(x_1,\dots,x_n) \in \big([-1,1]\backslash\{0\}\big)^n$ the set $K_{\vec{x}}$ has a good partition.
\end{lemma}

The proof of lemma \ref{partition2} is the main technical challenge and is postponed to the next section. We can now prove the main theorem. 

\begin{proof}[Proof of main theorem]
We regroup the terms in the product $f_n$ so that we can apply lemma \ref{Pohst}. Every index is either canonical or non canonical, we record this as \\ 

\noindent\textbf{Claim 1.} For each pair $1\leq i \leq j \leq n$ we have $(i,j) \in  K_{\vec{x}}$ or $(i,j) \in  J_{\vec{x}}$.\\

\noindent
lemma \ref{partition} and lemma \ref{partition2} ensure the existence of good partitions $\pi_{\vec{x}}$ and $\eta_{\vec{x}}$ on $J_{\vec{x}}$ and $K_{\vec{x}}$ respectively.
\begin{align*}
    f_n(x_1,\dots,x_n) & :=    \prod_{1\leq i \leq j \leq n} a_{\vec{x}(i,j)} && 
    (\text{By definition of }f_n)\\
     &=  \Big( \prod_{(i,j) \in K_{\vec{x}}} a_{\vec{x}(i,j)} \Big)\cdot  \Big( \prod_{(i,j)\in J_{\vec{x}}} a_{\vec{x}(i,j)} \Big) && \text{(By claim 1)}\\
    & =   \Big( \prod_{p \in \eta_{\vec{x}}} \prod_{(i,j) \in p} a_{\vec{x}(i,j)} \Big) \cdot \Big( \prod_{p \in \pi_{\vec{x}}} \prod_{(i,j) \in p} a_{\vec{x}(i,j)} \Big) && \text{(By lemmas \ref{partition} and \ref{partition2})} \\
    & \leq    2^{\min(\alpha+1,\beta)}. && \text{(By lemma \ref{Inequalities})}
\end{align*}
To prove the final inequality we need a bound for the number of $p$ in cases {\it{(4)}} or {\it{(5)}} of definition \ref{defpartition2}. The definition of a good partition ensures that it is at most $\min(\alpha+1,\beta)$.
\end{proof}

\section{Proof of lemma \ref{partition2}}

We provide an algorithm that constructs a good partition $\eta_{\vec{x}}$ of $K_{\vec{x}}$ inductively. We start by introducing some preliminary notation, propose a total order on $K_{\vec{x}}$ and provide some operations over sets to construct the partitions and introducing some preliminary definitions. We finally explicitly show the algorithm and prove its correctness. 

For simplicity, we are going to say that a pair $(i,j)$ is positive (respectively negative) if the corresponding term $a_{\vec{x}(i,j)}$ has positive (respectively negative) product sign. If we already know the product sign of a pair we will add it as a sub-index.

The following fact, whose proof is a straightforward calculation, ensures that through our proof we remain inside the set $K_{\vec{x}}$.\\

\textbf{Fact:} If in the set $\{(i, j),(i-l, j),(i, j+l'),(i-l, j+l')\}$, where $l, l' > 1$, three of the pairs are canonical, then the last one is also canonical. Moreover we have the relation $s(a_{\vec{x}(i,j)}) s(a_{\vec{x}(i-l,j+l')}) = s(a_{\vec{x}(i-l,j)}) s(a_{\vec{x}(i,j+l')})$. \\

In particular, given three of such signs we can compute the forth one. 

\subsection{Order on $K_{\vec{x}}$}

Let us define a total order over $\{(i,j): i\leq j\}$ as in Figure \ref{figure4}.

\begin{definition}
Given $(i,j), (i',j') \in \{(i,j): i\leq j\}$, we say that $(i,j)$ is \textbf{smaller} than $(i',j')$ if and only if $j' > j$, or $j'=j$ and $i' < i$. We denote this relation $(i,j)  \rightarrow (i',j')$.
\end{definition}

\begin{remark}
It may be simpler to notice that the order $\rightarrow$ corresponds to the lexicographic order on the pairs $(j,-i)$.
\end{remark}
\begin{figure}[h] 
    \centering
    \vspace{-5mm}
    \begin{tikzpicture}
    \matrix [matrix of math nodes] (m)
        {
            (1,n) &(2,n) &(3,n) &(4,n) &(5,n) &(6,n) & &(n,n)  \\
            \vdots& \vdots &\vdots &\vdots &\vdots &\vdots & \iddots & \\
            (1,6) &(2,6) &(3,6) &(4,6) &(5,6) &(6,6) & &  \\    
            (1,5) &(2,5) &(3,5) &(4,5) &(5,5) & & & \\      
            (1,4) &(2,4) &(3,4) &(4,4) & & & & \\     
            (1,3) &(2,3) &(3,3) & & & & & \\    
            (1,2) &(2,2) & & & & & & \\     
            (1,1)\\ 
            \hspace{1cm}&  \hspace{1cm}& \hspace{1cm} & \hspace{1cm} & \hspace{1cm} & \hspace{1cm}\\
        };  
       \draw[color=red][->] (m-7-2) edge (m-7-1);
       \draw[color=red][->]  (m-6-3) edge (m-6-2);
       \draw[color=red][->]  (m-6-2) edge (m-6-1);
       \draw[color=red][->]  (m-5-4) edge (m-5-3);
       \draw[color=red][->]  (m-5-3) edge (m-5-2);
       \draw[color=red][->]  (m-5-2) edge (m-5-1);
       \draw[color=red][->]  (m-4-5) edge (m-4-4);
       \draw[color=red][->]  (m-4-4) edge (m-4-3);
       \draw[color=red][->]  (m-4-3) edge (m-4-2);
       \draw[color=red][->]  (m-4-2) edge (m-4-1);
       \draw[color=red][->]  (m-3-6) edge (m-3-5);
       \draw[color=red][->]  (m-3-5) edge (m-3-4);
       \draw[color=red][->]  (m-3-4) edge (m-3-3);
       \draw[color=red][->]  (m-3-3) edge (m-3-2);
       \draw[color=red][->]  (m-3-2) edge (m-3-1);
       \draw[color=red][->]  (m-1-6) edge (m-1-5);
       \draw[color=red][->]  (m-1-5) edge (m-1-4);
       \draw[color=red][->]  (m-1-4) edge (m-1-3);
       \draw[color=red][->]  (m-1-3) edge (m-1-2);
       \draw[color=red][->]  (m-1-2) edge (m-1-1);
       \draw[color=red][->]  (m-1-8) edge[dotted] (m-1-6);
       \draw[color=red][->]  (m-8-1) edge[out=170,in=350] node[yshift=0.3ex] { } (m-7-2);
       \draw[color=red][->]  (m-7-1) edge[out=170,in=350] node[yshift=0.3ex] { } (m-6-3);
       \draw[color=red][->]  (m-6-1) edge[out=170,in=350] node[yshift=0.3ex] { } (m-5-4);
       \draw[color=red][->]  (m-5-1) edge[out=170,in=350] node[yshift=0.3ex] { } (m-4-5);
       \draw[color=red][->]  (m-4-1) edge[out=170,in=350] node[yshift=0.3ex] { } (m-3-6);
       \draw[color=red][->]  (m-3-1) edge[out=170,in=350,dotted] node[yshift=0.3ex] { } (m-2-7);
       \draw[color=red][->]  (m-2-1) edge[out=170,in=350,dotted] node[yshift=0.3ex] { } (m-1-8);
\end{tikzpicture}
    \vspace{-10mm}
    \caption{Order used to construct the partition of $K_{\vec{x}}$.}
    \label{figure4}
\end{figure}

\noindent
Define $P_0 \subset \mathcal{P}(K_{\vec{x}})$, a subset of the power set of $K_{\vec{x}}$, as $$P_0:=\big\{ \{(i,j)\} \big| \, s(a_{\vec{x}(i,j)})=1=(-1)^{i+j+1},\  1\leq i\leq j\leq n \big\}.$$

Let $N\geq 0$ be the number of negative pairs $(i,j)\in K_{\vec{x}}$. If $N=0$, then $P_0$ is already a good partition. If $N>0$, for $1\leq k \leq N$ we will add inductively to $P_{k-1}$ the $k$-th negative pair $(i,j)_-$.  Thus each of the first $k$ negative pairs $(i,j)_-$ in the order $ \rightarrow$ is contained in (some element of) $P_k$. We will show that $P_N$ is a partition $\eta_{\vec{x}}$ of $K_{\vec{x}}$ whose existence is claimed  by lemma \ref{partition2}. 

\subsection{Notation}

We will often need to refer to pairs contained in partition classes with specific configurations. We will introduce some auxiliary notation to simplify the language of the proof. For instance, given $(i,j) \in K_{\vec{x}}$, we draw a small diagram of the form $\left[\substack{+ - \\- +}\right]$ to describe the class partition it is contained in. We indicate by a circle the position of the pair $(i,j)$. This is described with greater precision on the following definition. 

\begin{definition}\label{cases}
Let $0 \leq k \leq N$. If $(i,j)_+ \in K_{\vec{x}}$ is positive, then we will say that
\begin{enumerate}
    \item $(i,j)_+$ is in a singleton if $\{(i,j)_+\} \in P_k$. We call it a $\sing$-configuration.

    \item $(i,j)_+$ is in an h-doubleton if $\{(i,j)_+,(i-l,j)_-\} \in P_k$, $i-l < i$. We call it a  $\hdoub$-configuration.

    \item $(i,j)_+$ is in a v-doubleton if $\{(i,j)_+,(i,j+l)_-\} \in P_k$, $j<j+l$. We call it a  $\vdoub$-configuration.

    \item $(i,j)_+$ is in an i-quadrupleton if $\{(i,j)_+,(i-l_1,j)_-,(i,j+l_2)_-,(i-l_1,j+l_2)_+\} \in P_k$, $l_1,l_2 \geq 1$. We call it a  $\iquad$-configuration.

    \item $(i,j)_+$ is in a t-quadrupleton if $\{(i,j)_+,(i+l_1,j)_-,(i,j-l_2)_-,(i+l_1,j-l_2)_+\} \in P_k$, $l_1,l_2 \geq 1$ and $i+l_1\leq j-l_2$.  We call it a $\tquad$-configuration.
    \item $(i,j)_+$ is in a tripleton if $\{(i,j)_+,(i,j-l)_-,(i+l',j)_-\}\in P_k$, $l,l'\geq 1$. We call it a $\trip$-configuration.

\end{enumerate}

If $(i,j)_-$ is negative and is contained in some subset of $P_k$, then we will say that

\begin{enumerate}
    \item $(i,j)_-$ is in an h-doubleton if $\{(i,j)_-,(i+l,j)_+\} \subset p \in P_k$, $i<i+l\leq j$.  We call it a $\nhdoub$-configuration.
    \item $(i,j)_-$ is in a v-doubleton if $\{(i,j)_-,(i,j-l)_+\} \subset p \in P_k$, $i\leq j-l< j$. We call it a $\nvdoub$-configuration.
    \item $(i,j)_-$ is in a d-tripleton if $\{(i,j)_-,(j+1,l)_-,(i,l)_+\}\in P_k$, $l\geq j+1$. We call it a $\dtrip$-configuration. 
    \item $(i,j)_-$ is in a u-tripleton if $\{(i,j)_-,(l,i-1)_-,(l,j)_+\}\in P_k$, $l\leq i-1$. We call it a $\utrip$-configuration. 
\end{enumerate}

\end{definition}

For example, saying that $(i,j)_-$ is in $\dtrip$-configuration means that $\{(i,j)_-,(j+1,l)_-,(i,l)_+\}\in P_k$ for some $l\geq 1$.

\subsection{Set Operations}

\begin{figure}[h]
    \centering
    \vspace{-5mm}
    \begin{subfigure}[t]{0.45\textwidth}
    \begin{tikzpicture}\hspace{1.5cm}
    \matrix [matrix of math nodes] (m)
        {
            (i,j)_{-} &\hdots &(i',j')_{+}  \\
            \vdots& & \\
            (i',j')_{+}& & \\
        };  
\end{tikzpicture} \caption{Two cases of Operation 1.}   
    \label{Op1fig}
    \end{subfigure}
    \hfill \hfill
    \begin{subfigure}[t]{0.45\textwidth}
        \centering
    \begin{tikzpicture}
    \matrix [matrix of math nodes] (m)
        {
            (r,j)_{+} &\hdots &(i,j)_{-}  \\
            \vdots&  &\vdots \\
            (r,l)_{-}& \hdots &(i,l)_{+} \\
        };    
\end{tikzpicture}    \caption{Operation 2.}
    \label{Op2fig}
    \end{subfigure}
    \vspace{0.2cm}

    \centering
    \begin{subfigure}[t]{0.45\textwidth}
        \centering
     \begin{tikzpicture}
    \matrix [matrix of math nodes] (m)
        {
         &(i,j)_-\\
         &\vspace{1cm} \\
        };
\end{tikzpicture}    \caption{Operation 3.}
    \label{Op3fig}
    \end{subfigure}
    \hfill
   \begin{subfigure}[t]{0.45\textwidth}
    \centering
    \begin{tikzpicture}
    \matrix [matrix of math nodes] (m)
        {
           (i',j)_+ & \hdots & (i,j)_-\\
           \vdots & & \\
           (i',i-1)_- &  & \\
        };  
\end{tikzpicture} \caption{Operation 4.}   
    \label{Op4fig}
    \end{subfigure}
    \caption{Operations over $P_{k-1}$ to produce $P_k$.}
        \label{figureoP}
\end{figure}

If $(i,j)_-$ is the $k$-th  negative element of $K_{\vec{x}}$ we will choose one of four operations to apply to $P_{k-1}$ to produce $P_k$. See figure \ref{figureoP}.\\

\begin{enumerate}
    \item[ ] Operation 1. If $\{(i',j')\} \in P_{k-1}$ with $i'=i$ and $i \leq j' < j$, or $j'=j$ and $i < i' \leq j$, where $(i',j')_+$ is positive, then
$$
P_k:=\Big(P_{k-1}-\big\{\{(i',j')\}\big\}\Big)\cup\Big\{\{(i',j'),(i,j)\}\Big\}.
$$
    \item[ ] Operation 2. If  $\{(i,l),(r,l)\}\in P_{k-1}$ and $\{(r,j)\} \in P_{k-1}$ with $i\leq l < j$ and $1 \leq r < i$, where $(i,l)_+$ and $(r,j)_+$ are positive and $(r,l)_-$ is negative, then 
$$
P_k:=\Big(P_{k-1}-\big\{\{(i,l),(r,l)\},\{(r,j)\}\big\}\Big)\cup\Big\{\{(i,l),(r,l),(i,j),(r,j)\}\Big\}.
$$

   \item[ ] Operation 3. 
   $$
P_k:=P_{k-1}\cup\Big\{\{(i,j)\}\Big\}.
$$
   \item[ ] Operation 4. If  $\{(i',i-1)\}\in P_{k-1}$ and $\{(i',j)\} \in P_{k-1}$ with $1\leq  i' < i$, where $(i',i-1)_-$ is negative and $(i',j)_+$ is positive, then 
   $$
P_k:=\Big(P_{k-1}-\big\{\{(i',i-1)\},\{(i',j)\}\big\}\Big)\cup\Big\{\{(i',i-1),(i',j),(i,j)\}\Big\}.
$$
\end{enumerate}

Thus operation 1 removes a singleton from $P_{k-1}$ and inserts a doubleton containing this singleton. Operation 2 removes a doubleton and a singleton from $P_{k-1}$ and inserts a quadrupleton containing the removed elements and forming the vertices of a rectangle. Operation 3 adds a singleton. Operation 4 removes two singletons and adds a tripleton.

At step $0$, $P_0$ only contains singletons with indices with positive sign, these correspond to partition classes of the form {\it{(1)}} in definition \ref{defpartition2}. Each time we use one of the operations we are only left with partitions classes as in definition \ref{defpartition2}. For instance, operation 1 produces classes of the form of {\it{(2)}} in definition \ref{defpartition2}, operation 2 produces classes of the form of {\it{(3)}} in definition \ref{defpartition2}, operation 3 produces classes of the form of {\it{(4)}} in definition \ref{defpartition2} and operation 4 produces classes of the form of {\it{(5)}} in definition \ref{defpartition2}.

\subsection{The algorithm}

For each $1\leq k \leq N$, suppose $(i,j)_-$ the $k$-th negative pair we will follow the next algorithm to decide which operation to use to produce $P_k$ from $P_{k-1}$. Fix the horizontal list  $$l_{(i,j)}:=\{(i,j),(i+1,j),\dots,(j,j)\}\cap K_{\vec{x}}.$$
\begin{definition}
Let $1 \leq j \leq n$ and let $\alpha'_j$ be the number positive pairs on the sequence $\{(1,i)\}_{i=1}^{j}$ and let $\beta'_j=j-\alpha'_j$ the number of negative pairs.
\begin{itemize}
    \item We say that the \textbf{level j is stable} if $\min(\alpha'_{j-1}+1,\beta'_{j-1})=\min(\alpha'_j+1,\beta'_j)$. Otherwise, we say that the \textbf{level j is unstable}. We say that $(i,j)$ is at a stable level if level j is stable and $(i,j)$ is at a unstable level if level j is unstable. 
    \item For each $1\leq k \leq N$, if $(i,j)_-$ is the $k$-th negative pair and there exist some positive pair on $l_{(i,j)}$ in a $\sing$-configuration in $P_{k-1}$, we say that $(i,j)_-$ is \textbf{ideal}. Otherwise, we say that $(i,j)_-$ is \textbf{non ideal}.
\end{itemize}    
\end{definition}
The idea of the algorithm is to first check whether $(i,j)_-$ is ideal or not. If it is not then we check if the level $j$ is stable or not. Finally, we check if $(i,j)_-$ is the smallest non ideal pair in $l_{(i,j)}$. If the level is unstable, we additionally check whether $(i,j)_-$ is the second smallest non ideal pair in $l_{(i,j)}$. In each one of the $6$ cases we obtain we can use one of the operations introduced previously. \\

\noindent
In the description of the cases, when we say that we use operation 2 with $(i,j')_+$ and $(i,j')_+$ is in $\hdoub$-configuration (Cases 3, 4 and 6), we are implicitly referring to the pair $(i',j')_-$ in $\nhdoub$-configuration with $(i,j')_+$ and we will prove (Lemmas \ref{Case3Possible}, \ref{Case4Possible} and \ref{Case6Possible}) that the pair $(i',j)_+$ in $\sing$-configuration, which ensures the we can use operation 2.
\begin{itemize}
    \item \textbf{Case 1.} If $(i,j)_-$ is ideal, then we use operation 1 with the maximal (with the order $ \rightarrow$) positive pair $(i+l,j)_+$, $i<i+l\leq j$, contained in the list $l_{(i,j)}$ that is in a $\sing$-configuration. 
    \item \textbf{Case 2.} If the level $j$ is unstable and $(i,j)_-$ is the smallest not ideal pair on  $l_{(i,j)}$, then we will use operation 3.
    \item \textbf{Case 3.}  If the level $j$ is unstable and $(i,j)_-$ is the second smallest not ideal pair on  $l_{(i,j)}$. We will prove (lemma \ref{Case3Possible}) that there exists a pair $(i,j')_+$ contained in a $\sing$-configuration or in a $\hdoub$-configuration. Then we use operation 1 in the first instance and use operation 2 in  the second instance. We record the pair $(i,j')_+$ to use it on Case 4 with the next non ideal pair. 
    \item \textbf{Case 4.}  If the level $j$ is unstable and $(i,j)_-$ is not ideal but neither the smallest or the second smallest not ideal pair on  $l_{(i,j)}$. Let $(i_1,j)_-$ be the second not ideal pair on the list $l_{(i,j)}$. From Case 3, $(i_1,j)_-$ is contained in a $\nvdoub$-configuration with some positive pair $(i_1,j_1)_+$. We will prove (lemma \ref{Case4Possible}) that the positive pair $(i,j_1)_+$ is contained in a $\sing$-configuration or in a $\hdoub$-configuration. Then we use operation 1 in the first instance and use operation 2 in the second instance.
    
    \item \textbf{Case 5.} If the level $j$ is stable and $(i,j)_-$ the smallest not ideal pair on  $l_{(i,j)}$. We will prove (lemma \ref{Case5Possible}) that there is a pair $(i',i-1)_-$ contained in a $\nsing$-configuration and the pair $(i',j)_+$ is contained in a $\sing$-configuration. Then we use operation 4. We record the pair $(i',i-1)_-$ to use it on Case 6 with the next non ideal pair. 
    
    \item \textbf{Case 6.}  If the level $j$ is stable and $(i,j)_-$ is not ideal but not the smallest not ideal pair on  $l_{(i,j)}$. Let $(i_1,j)_-$ be the smallest not ideal pair on the list $l_{(i,j)}$. From Case 5, $(i_1,j)_-$ is contained in a $\utrip$-configuration with some negative pair $(i',i_1-1)_-$. We will prove (lemma \ref{Case6Possible}) that the positive pair $(i,i_1-1)_+$ is contained in a $\sing$-configuration or in a $\hdoub$-configuration. Then we use operation 1 in the first instance and use operation 2 in  the second instance.
\end{itemize}

The number of $p \in P_k$ in points {\it{(4)}} or {\it{(5)}} only increases when we enter an unstable level for the first time (\ie Case 2) to use operation 3, however the number of unstable levels is precisely $\min(\alpha+1,\beta)$. Hence, it is enough to prove that the algorithm is correct, \ie to prove that the can actually perform the required operation on cases 1 through 6 of the algorithm. This is proven in the next subsection. With special attention to cases 3 through 6 that correspond to the lemmas \ref{Case3Possible}, \ref{Case4Possible}, \ref{Case5Possible}, and \ref{Case6Possible}.

We refer the reader to the appendix for a step-by-step description of the algorithm along with an example of a partition produced from it. 

\begin{figure}[h]
    \centering
    \vspace{-5mm}
    \begin{subfigure}[t]{0.45\textwidth}
    \begin{tikzpicture}
    \matrix [matrix of math nodes] (m)
        {
            (i,j)_{-} &\hdots &(i',j)_{-} & \hdots & (i+l,j)_{+} & \hdots & (i'+l',j)_{+}  \\
            & & & & & &  \\
            & & & & & &\\
        };  
        \draw[color=red][-] (m-1-1) edge[out=25,in=155] (m-1-5);
        \draw[color=red][-] (m-1-3) edge[out=25,in=155] (m-1-7);
\end{tikzpicture} \caption{Impossible configuration \textit{(1)}.}   
    \label{badconf1}
    \end{subfigure}
    \hfill \hfill
    \begin{subfigure}[t]{0.45\textwidth}
        \centering
    \begin{tikzpicture}
    \matrix [matrix of math nodes] (m)
        {
            (i,j)_{-} & \hdots &(i',j)_{-} & \hdots & (i+l,j)_{+}  \\
            &  & \vdots & &  \\
            &  & (i',j-l')_{+} & & \\
        };  
        \draw[color=red][-] (m-1-1) edge[out=25,in=155] (m-1-5);
        \draw[color=red][-] (m-3-3) edge[out=45,in=-45] (m-1-3);
\end{tikzpicture}    \caption{Impossible configuration \textit{(2)}.}
    \label{badconf2}
    \end{subfigure}
    \vspace{0.2cm}

    \centering
    \begin{subfigure}[t]{0.45\textwidth}
        \centering
     \begin{tikzpicture}
    \matrix [matrix of math nodes] (m)
        {
            &  &(i',j+l)_{-}\\
            &  & \vdots  \\
            (i,j)_-& \hdots  & (i',j)_{+}\\
        };  
        \draw[color=red][-] (m-3-3) edge[out=45,in=-45] (m-1-3);
\end{tikzpicture}    \caption{Impossible configuration \textit{(3)}.}
    \label{badconf3}
    \end{subfigure}
    \hfill
   \begin{subfigure}[t]{0.45\textwidth}
    \centering
    \begin{tikzpicture}
    \matrix [matrix of math nodes] (m)
        {
           (i,j)_- & \hdots & (i',j)_-\\
           \vdots & & \vdots\\
           (i,j-l_1)_+ &  & (i',j-l_2)_+\\
        };  
        \draw[color=red][-] (m-3-1) edge[out=135,in=225] (m-1-1);
        \draw[color=red][-] (m-3-3) edge[out=45,in=-45] (m-1-3);
\end{tikzpicture} \caption{Impossible configurations \textit{(4)} and \textit{(5)}.}   
    \label{badconf4}
    \end{subfigure}
    
    \caption{Configurations from lemma \ref{notpossible}.}
        \label{badconf}
\end{figure}

\subsection{Correctness of the algorithm}

We start by stating some technical lemmas that describe the structure of the partition constructed via our algorithm.  

\begin{lemma}\label{notpossible}
Let $0 \leq k \leq N$, suppose that we are able to construct $P_k$ following the method above. The following five configurations are impossible in $P_k$. See Figure \ref{badconf}.
\begin{enumerate}
    \item $(i,j)_-$ is in a $\nhdoub$-configuration with $(i+l,j)_+$, $(i',j)_-$ is in a $\nhdoub$-configuration with $(i'+l',j)_+$ and $i < i' <i+l < i'+l'$. 
    \item $(i,j)_-$ is in a $\nhdoub$-configuration with $(i+l,j)_+$, $(i',j)_-$ is in a $\nvdoub$-configuration with $(i',j-l')_+$ and $i < i' <i+l$.
    \item  $(i,j)_-$ is in a $\nvdoub$-configuration, $\dtrip$-configuration or $\utrip$-configuration, $(i',j)_+$ is in a $\vdoub$-configuration with $(i',j+l)_-$ and $i<i'$.
    \item $(i,j)_-$ is in a $\nvdoub$-configuration with $(i,j-l_1)_+$, $(i',j)_-$ is in a $\nvdoub$-configuration with $(i',j-l_2)_+$ $i<i'$ and $l_1\neq l_2$.
    \item $(i,j)_-$ is in a $\nvdoub$-configuration with $(i,j-l_1)_+$, $(i',j)_-$ is in a $\nvdoub$-configuration with $(i',j-l_2)_+$,  $i<i'$ and $(i,j)_-$ is the smallest non ideal negative pair on the list $l_{(i,j)}$.
\end{enumerate}
\end{lemma} 

\begin{proof}
All these configurations are impossible due to the order in which we perform the algorithm. The first three items are consequences of Case 1 of the construction given above, item \textit{(4)} is a consequence of Case 3 and item  \textit{(5)} is a consequence of Case 2.  
\end{proof}

We will postpone the proofs of the following five lemmas to section \ref{SectionTechnicalLemmas}. Lemmas \ref{Combinatorial1} and \ref{MinimalPairs} ensure the existence of some positive pairs, while lemmas \ref{ImpConf1}, \ref{ImpConf3} and \ref{Strat} further establish the rigid structure of the partition we construct. See figure \ref{MoreImp}. 

\begin{lemma}\label{Combinatorial1}
If $\vec{x}=(x_1,\dots,x_n) \in \big([-1,1]\backslash\{0\}\big)^n$, $n$ is odd and if the number of $x_i$ with $\sign(x_i)=-1$ is odd, then $b_{1}(\vec{x})+1=b_{-1}(\vec{x})$, where $$ b_{l}(\vec{x}):=|\{(i,j) \in K_{\vec{x}} : s(a_{\vec{x}(i,j)})=l, i=1 \text{ or } j=n \}|.$$
\end{lemma}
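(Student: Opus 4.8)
The plan is to translate the combinatorial claim into a statement purely about the sequence of partial products $c_i := \prod_{k=1}^{i-1} x_k$ (with $c_1 = 1$), whose signs record the signs of the variables $y_i$. Recall that $\s(a_{v(i,j)}) = \prod_{k=i}^j \sign(x_k) = \sign(c_i)\sign(c_{j+1})$, and that $(i,j)$ is canonical exactly when $\s(a_{v(i,j)}) = (-1)^{i+j+1}$, i.e. when $\sign(c_i)\sign(c_{j+1}) = (-1)^{i+j+1}$. Introducing the ``twisted signs'' $\epsilon_i := (-1)^i \sign(c_i)$ for $i = 1,\dots,n+1$, one checks that $(i,j) \in K_v$ with $\s(a_{v(i,j)}) = l$ corresponds precisely to $\epsilon_i \epsilon_{j+1} = -l$. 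So $b_1(v)$ counts pairs $(i,j)$ with $i = 1$ or $j = n$ and $\epsilon_i = \epsilon_{j+1}$ (canonical, positive sign $\Rightarrow \epsilon_i\epsilon_{j+1} = -1$... I'd recompute the exact correspondence carefully), and $b_{-1}(v)$ the pairs with the opposite relation. The point of the hypotheses — $n$ odd and an odd number of sign changes — is that $c_1$ and $c_{n+1}$ have opposite signs, which after the $(-1)^i$ twist (and using $n$ odd) forces $\epsilon_1 = \epsilon_{n+1}$ or $\epsilon_1 = -\epsilon_{n+1}$ in a definite way; that parity constraint is what produces the ``$+1$''.

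Concretely, I would set $I := \{i : \epsilon_i = \epsilon_1\}$ and $I' := \{i : \epsilon_i = -\epsilon_1\}$ among $i \in \{1,\dots,n+1\}$, and write $s := |I|$, $t := |I'|$, so $s + t = n+1$. The ``boundary'' canonical pairs with $i = 1$ are governed by whether $\epsilon_1 = \epsilon_{j+1}$, i.e. by which of the two classes $j+1$ lies in; likewise those with $j = n$ are governed by the class of $\epsilon_i$ relative to $\epsilon_{n+1}$. Since we only want pairs that are \emph{canonical}, there is no further restriction beyond the sign condition, so $b_l(v)$ becomes a sum of two simple counts coming from the row $i = 1$ and the column $j = n$, with a correction for the corner pair $(1,n)$ which is counted in both. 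Writing everything in terms of $s,t$ and of the classes of $\epsilon_2$ (the neighbor of the corner in the $j=n$... ) — actually in terms of the classes of $\epsilon_{n+1}$ and $\epsilon_1$ — reduces the lemma to an arithmetic identity of the shape $(\text{something in } s,t) + 1 = (\text{the complementary something})$, which holds exactly because $s$ and $t$ have the parities forced by ``$n$ odd, odd number of negative $x_i$.''

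The key steps in order: (1) rewrite $\s(a_{v(i,j)})$ via partial-product signs and introduce the twisted signs $\epsilon_i$, noting $K_v$ is characterized by $\epsilon_i \epsilon_{j+1}$ and that $\epsilon_1, \epsilon_{n+1}$ are pinned down by the hypotheses; (2) for the boundary set $\{(i,j) \in K_v : i = 1 \text{ or } j = n\}$, split into the three disjoint pieces $\{i=1, j<n\}$, $\{j=n, i>1\}$, $\{(1,n)\}$ and express the $\s = l$ count on each piece in terms of $s = |\{i : \epsilon_i = \epsilon_1\}|$ and $t = n+1-s$; (3) add the pieces for $l=1$ and for $l=-1$ and subtract to get $b_{-1}(v) - b_1(v)$ as an explicit linear expression in $s$; (4) invoke the parity of $s$ forced by ``$n$ odd and an odd number of negative $x_i$'' (which makes $\sign(c_{n+1}) = -\sign(c_1)$, hence $\epsilon_1 \ne \epsilon_{n+1}$, hence $s$ has a determined parity) to evaluate that expression as exactly $1$.

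The main obstacle I anticipate is step (2)–(3): getting the bookkeeping of the corner pair $(1,n)$ and the precise correspondence between ``$\s = l$'' and ``$\epsilon$'s equal or unequal'' exactly right, including the sign of $l$ and whether $(1,n)$ itself is canonical (it is canonical iff $\epsilon_1 \ne \epsilon_{n+1}$, which the hypotheses guarantee), so that no off-by-one or sign error creeps in. Everything else is a short parity computation. An alternative, perhaps cleaner, route that I would keep in reserve is an induction on $n$ (in steps of $2$, to preserve the parity hypotheses) or a direct bijection pairing each boundary canonical pair of sign $+1$ with one of sign $-1$ and exhibiting the single unmatched pair of sign $-1$ explicitly as, say, the pair $(1,n)$ itself; but the twisted-sign counting argument above is the one I would write up first.
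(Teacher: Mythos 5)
Your overall strategy is sound, and since the paper itself only outsources this lemma to Lemma~10 of \cite{Ra}, your explicit counting argument is a legitimate (and arguably more self-contained) route. I checked that it does close: with $c_i:=\prod_{k=1}^{i-1}x_k$ and $\epsilon_i:=(-1)^i\sign(c_i)$ one gets $\s(a_{v(i,j)})=(-1)^{i+j+1}\epsilon_i\epsilon_{j+1}$, so the row $i=1$ and column $j=n$ contributions to $b_l(v)$ are counts of indices $m$ with $\epsilon_m=-1$ and prescribed parity, and after removing the endpoint corrections and the doubly counted corner $(1,n)$ one finds $b_1(v)=A-2$ and $b_{-1}(v)=A-1$ where $A=\#\{m:\epsilon_m=-1\}$; the difference is identically $1$, with no further parity analysis of your $s,t$ needed. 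So the skeleton you describe genuinely works.

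However, two of the specific correspondences you wrote down are wrong and must be fixed before the bookkeeping in your steps (2)--(3) can be carried out. First, $(i,j)\in K_v$ is \emph{not} characterized by $\epsilon_i\epsilon_{j+1}=-l$: canonicity means $\s(a_{v(i,j)})=(-1)^{i+j+1}$, which in the twisted variables is exactly $\epsilon_i\epsilon_{j+1}=+1$ regardless of $l$; the value of $l$ for a canonical pair is then read off from the parity, $l=(-1)^{i+j+1}$. (You flagged this as needing a recomputation, but as stated the dichotomy ``$b_1$ counts $\epsilon_i=\epsilon_{j+1}$, $b_{-1}$ the opposite relation'' is simply false: \emph{all} canonical pairs satisfy $\epsilon_i=\epsilon_{j+1}$.) Second, the hypotheses ``$n$ odd and an odd number of negative $x_i$'' give $\sign(c_{n+1})=-1=-\sign(c_1)$, hence $\epsilon_1=-1$ and $\epsilon_{n+1}=(-1)^{n+1}\cdot(-1)=-1$, i.e.\ $\epsilon_1=\epsilon_{n+1}$, the opposite of what you assert; correspondingly the corner $(1,n)$ is canonical because $\epsilon_1=\epsilon_{n+1}$, and it carries sign $(-1)^{n}=-1$, which is precisely the unmatched negative boundary pair producing the ``$+1$''. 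With these two corrections your plan goes through essentially verbatim, and in fact more simply than you anticipate, since the classes $I$, $I'$ and the parity of $s$ never enter the final cancellation.
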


\begin{lemma}\label{MinimalPairs}
Let $h_{(i,j)}=\{(i+1,j),(i+2,j),\dots,(j,j)\}$, $N_{(i,j)}$ and $P_{(i,j)}$ be the number of negative pairs and positive pairs respectively. Then the following holds.
\begin{itemize}
    \item Suppose $(i,j)_-$ is the smallest non ideal negative pair on the list  $l_{(i,j)}$ ( \ie we are are in Cases 2 or 5), then $N_{(i,j)}=P_{(i,j)}$.
    \item Suppose $h_{(i,j)}$ contains more positive canonical pairs than negative canonical pairs, then $N_{(i,j)} \leq P_{(i,j)}$.
\end{itemize}
\end{lemma}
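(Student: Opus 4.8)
The plan is to convert this combinatorial statement about the partition construction into a statement about the sign vector $(\operatorname{sign} y_1,\dots,\operatorname{sign} y_{n+1})$, and then to read off the needed count by tracking which operations have already acted on the positive singletons of row $j$ by the time $(i,j)$ is reached.

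First I would set up the sign reformulation. Writing $\sigma_r=\operatorname{sign}(y_r)$, we have $\s(a_{v(r,j)})=\sigma_r\sigma_{j+1}$, so a pair $(r,j)$ lies in $K_v$ and is positive exactly when $r+j$ is odd and $\sigma_r=\sigma_{j+1}$, and lies in $K_v$ and is negative exactly when $r+j$ is even and $\sigma_r\ne\sigma_{j+1}$. Since $(i,j)$ is negative, $i+j$ is even, hence $j-i$ is even, and splitting $\{i+1,\dots,j\}$ according to the parity of $r$ gives, after a short count,
\[
N_{(i,j)}-P_{(i,j)}=\#\{\,r:\ i<r\le j,\ \sigma_r\ne\sigma_{j+1}\,\}-\tfrac{j-i}{2}.
\]
So the two bullets say that exactly $\tfrac{j-i}{2}$, respectively $\tfrac{j-i}{2}+2$, of the numbers $y_{i+1},\dots,y_{j}$ have sign opposite to $y_{j+1}$.

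Next, the bookkeeping. When $(i,j)$ is processed, every negative pair of row $j$ strictly to its right has already been treated, since those pairs are $\prec$-smaller, and none of them received an operation while earlier rows were being treated. A positive pair of row $j$ at a position $>i$ can have ceased to be a singleton only through an Operation~1 in its same-row form (triggered by Case~1) applied to one of these right-hand negative pairs, or — in the cases that use Operation~2 — through an Operation~2 applied to one of them, which removes a positive singleton of row $j$ lying to the left of the treated pair. An Operation~1 of the first kind removes exactly one positive singleton, located strictly to the right of its triggering pair, and distinct triggers remove distinct singletons. Since Case~1 is inapplicable to $(i,j)$, no positive pair of $h_{(i,j)}$ remains a singleton; counting removals against the $P_{(i,j)}$ positive and $N_{(i,j)}$ negative pairs of $K_v$ inside $h_{(i,j)}$, one finds that $N_{(i,j)}-P_{(i,j)}$ equals the number of negative pairs of $h_{(i,j)}$ not treated by Case~1, corrected by the number of positive singletons of $h_{(i,j)}$ consumed by those Operation~2's.

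For the first bullet (Cases~2 and~5), $(i,j)$ is itself the $\prec$-minimal negative pair of row $j$ to which Case~1 does not apply, so every negative pair strictly to its right was treated by Case~1, no Operation~2 acted on $h_{(i,j)}$, and the difference is $0$, \ie $N_{(i,j)}=P_{(i,j)}$. For the second bullet (Case~3), $(i,j)$ is the $\prec$-minimal negative pair to which neither Case~1 nor Case~2 applies; by minimality every negative pair strictly to its right to which Case~1 fails had Case~2 applicable to it, and since we are at an unstable level (where exactly one Operation~3 is performed) there is precisely one such pair, contributing one unit to the difference. The step I expect to be the main obstacle is the exact equality $N_{(i,j)}-P_{(i,j)}=2$: it requires combining this single Operation~3 with the Operation~2 corrections from the intervening pairs and with the unstable-level hypothesis, and I would carry it out by a telescoping/parity analysis of $\sigma_{i+1},\dots,\sigma_{j}$ on the portion of row $j$ between $(i,j)$ and the pair that received Operation~3 — in the style of the count behind Lemma~\ref{Combinatorial1} — to show that exactly two extra indices $r$ with $\sigma_r\ne\sigma_{j+1}$ are forced beyond the balanced value $\tfrac{j-i}{2}$, which by the first step is the claim.
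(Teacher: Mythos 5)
Your sign reformulation is the right first move, but it contains a miscount that derails the second bullet. In the lemma, $h_{(i,j)}=\{(i+1,j),\dots,(j,j)\}$ is \emph{not} intersected with $K_v$ (contrast $l_{(i,j)}$), and $N_{(i,j)}$, $P_{(i,j)}$ count all $j-i$ pairs by product sign; this is how the lemma is used later (e.g.\ in the proof of Lemma \ref{Case5Possible}, where the positive/negative counts on a full line are additive level by level) and it is why the paper's own proof spends its second half counting the non-canonical pairs separately. With that reading, $N_{(i,j)}=\#\{r:\ i<r\le j,\ \sigma_r\ne\sigma_{j+1}\}$ outright, so $N=P$ does mean $\tfrac{j-i}{2}$ opposite signs, but $N=P+2$ means $\tfrac{j-i}{2}+1$ opposite signs, i.e.\ a surplus of exactly \emph{one} canonical negative over canonical positives, not two. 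Your own bookkeeping, pushed through, delivers precisely that: in Case 3 exactly one negative pair of row $j$ to the right of $(i,j)$ escaped Case 1 and was handled by Operation 3 (Case 2 applies only to the $\prec$-minimal such pair), every other one consumed a distinct positive canonical singleton of $h_{(i,j)}$ lying to its right, and the failure of Case 1 at $(i,j)$ says all such singletons are consumed; hence canonical $N$ minus canonical $P$ equals $1$, which becomes $2$ only after the non-canonical pairs are added back via your own identity. The ``telescoping/parity analysis'' you defer to, aimed at forcing \emph{two} extra indices with $\sigma_r\ne\sigma_{j+1}$, would therefore be chasing a false statement --- and in any case that step is only announced, not carried out, so the second bullet is not proved.

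The first bullet is essentially correct and takes a genuinely different route from the paper's: where the paper combines the static invariant of Lemma \ref{Combinatorial1} with a run-length count of non-canonical pairs, you track dynamically which singleton each prior operation consumed. Two small repairs are needed even there: (i) you should also rule out Operation 4 as a consumer of positive singletons of row $j$ (it removes a singleton $(i',j)_+$ to the left of its trigger; minimality of $(i,j)$ for Case 1 excludes it here, but this must be said); and (ii) what your argument yields is the balance of \emph{canonical} positives and negatives, so you still need the sign identity to convert it into the all-pairs equality $N_{(i,j)}=P_{(i,j)}$ that the lemma actually asserts.
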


\begin{lemma}\label{ImpConf1}
Given $P_k$ a step in the construction of $\eta_{\vec{x}}$. We never have $(i,j)_-$ and $(i,j')_-$ both smallest non ideal pairs on the lists $l_{(i,j)}$ and $l_{(i,j')}$ respectively that are contained in the same column. 
\end{lemma}

\begin{figure}[h]
    \centering
    \vspace{-5mm}
    \begin{subfigure}[t]{0.45\textwidth}
    \begin{tikzpicture}
    \hspace{1.5cm}\matrix [matrix of math nodes] (m)
        {
            (i,j)_{-} & &  \\
            \vdots& & \\
            (i,j_1)_{+}&\hdots & (i',j_1)_{-} \\
        };   
        \draw[color=red][-] (m-3-1) edge[out=-45,in=225] (m-3-3);
\end{tikzpicture} \caption{Impossible configuration in lemma \ref{ImpConf3}.}   
    \end{subfigure}
    \hfill \hfill
    \begin{subfigure}[t]{0.45\textwidth}
        \centering
    \begin{tikzpicture}
    \matrix [matrix of math nodes] (m)
        {
           (i,j_2) &  & (i',j_2')\\
           \vdots & & \vdots\\
           (i,j) & \hdots & (i',j)\\
        };  
        \draw[color=red][-] (m-3-1) edge[out=135,in=225] (m-1-1);
        \draw[color=red][-] (m-3-3) edge[out=45,in=-45] (m-1-3);
\end{tikzpicture}    \caption{Impossible configuration in lemma \ref{Strat}.}
    \end{subfigure}
    \caption{Impossible configurations from lemmas \ref{ImpConf3} and \ref{Strat}.}
    \label{MoreImp}
\end{figure}

\begin{lemma}\label{ImpConf3}
Given $P_k$ a step in the construction of $\eta_{\vec{x}}$. Suppose either the level $j$ is unstable and $(i,j)_-$ is not ideal but not the smallest or second smallest non ideal on $l_{(i,j)}$ or that the level $j$ is stable and $(i,j)_-$ is not ideal but not the smallest non ideal on $l_{(i,j)}$ (\ie We are in case 4 or 6), then the positive positive pair $(i,j_1)_+$ cannot be in the same partition as $(i',j_1)_-$, $i<i'$.
\end{lemma}

\begin{definition}
Given $P_k$ a step in the construction of $\eta_{\vec{x}}$. We say that two horizontal lines $h_{j_1}=\{(1,j_1),(2,j_1),\dots,(j_1,j_1)\}$ and $h_{j_2}=\{(1,j_2),(2,j_2),\dots,(j_2,j_2)\}$ are connected if there exist an element $p\in P_k$ such that $p\cap h_{j_1} \neq \emptyset$ and $p\cap h_{j_2} \neq \emptyset$. 
\end{definition}

\begin{lemma}\label{Strat}
Given $P_k$ a step in the construction of $\eta_{\vec{x}}$. Given an a horizontal line $h_j=\{(1,j),(2,j),\dots,(j,j)\}$, there exist at most one $j_1 < j$ and one $j_2>j$ such that $h_j$ is connected to $h_{j_1}$ and to $h_{j_2}$.
\end{lemma}

We now prove the correctness of the algorithm via lemmas \ref{Case3Possible}, \ref{Case4Possible}, \ref{Case5Possible}, and \ref{Case6Possible}.

\begin{lemma}[Case 3]\label{Case3Possible}
Suppose $(i,j)_-$ is the second smallest non ideal pair on the list $l_{(i,j)}$ in a unstable level. Then there exist a positive pair $(i,j')_+$ contained in $\sing$-configuration (So we can apply operation 1), or in $\hdoub$-configuration with $(i',j')_-$ and $(i',j)_+$ is in $\sing$-configuration (So we can apply operation 2). 
\end{lemma}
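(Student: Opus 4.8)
The plan is to assume that neither alternative of the conclusion holds and to derive a contradiction from the count in Lemma~\ref{MinimalPairs}. Since $(i,j)_-$ puts us in Case 3, that lemma gives $N_{(i,j)}=P_{(i,j)}+2$: the segment $h_{(i,j)}=\{(i+1,j),\dots,(j,j)\}\cap K_v$ carries exactly two more negative canonical pairs than positive ones, and, because Case~1 failed, none of its positive pairs is in $\sing$-configuration.

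The second step is to pin down the configurations available to the pairs of $h_{(i,j)}$ at stage $\eta_{k-1}$. The order $\prec$ processes rows from the bottom upward, so no pair in row $j$ can yet belong to a configuration meeting a row strictly above $j$; listing the configurations compatible with Definition~\ref{defpartition2}, this leaves, for a positive pair of $h_{(i,j)}$, only $\hdoub$, $\trip$ or $\tquad$, and for a negative pair only $\nsing$, $\nvdoub$, $\nhdoub$, $\utrip$ or $\nuquad$. In every one of these the pair has exactly one partner of opposite sign lying in row $j$ --- to its left in $\hdoub$, $\utrip$, $\nuquad$ and to its right in $\trip$, $\tquad$, $\nhdoub$ --- with the only exceptions of $\nsing$, which has no partner, and $\nvdoub$, whose partner sits strictly below row $j$ in the same column. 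Partnerships with both endpoints inside $h_{(i,j)}$ cancel one positive against one negative, so the surplus of $2$ must come from the negative singletons of $h_{(i,j)}$ (there is at least one, the pair treated in Case~2), the $\nvdoub$-negatives, and the negatives whose row-$j$ partner lies at a column $\le i$, corrected downward by the positives whose row-$j$ partner lies at a column $\le i$; since $(i,j)$ is being processed now, each such partner actually sits at a column strictly less than $i$.

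The heart of the proof is to drive this surplus down column $i$. I would examine the canonical pairs of column $i$ below row $j$ (all already placed) together with the pairs of row $j$ strictly to the left of column $i$, and run a segment-balance argument in the style of the proof of Lemma~\ref{MinimalPairs}, now combined with Lemma~\ref{ImpConf2} (no negative $(i,j')_-$ with $j'<j$ is in $\nhdoub$-configuration) and with the minimality of $(i,j)_-$. The surplus of $2$ in $h_{(i,j)}$, together with the absence of positive singletons to the right of $(i,j)$, should force one of two outcomes: either some positive canonical pair $(i,j')$ with $j'<j$ is still a singleton --- the first alternative, on which operation~1 applies --- or, once all such pairs have been absorbed, there is a positive pair $(i,l)_+$ in $\hdoub$-configuration with a negative $(r,l)_-$, $r<i$, $l<j$, whose row-$j$ matching pair $(r,j)$ is forced to be a singleton, because the two surplus negatives of $h_{(i,j)}$ have exhausted every positive of row $j$ between columns $r$ and $i$ --- the second alternative, on which operation~2 applies.

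The main obstacle is precisely the bookkeeping of this last step: one must show that the surplus cannot be absorbed entirely by $\nsing$- and $\nvdoub$-negatives and by leftward-reaching triple and quadruple configurations without producing one of the two patterns at column $i$. I expect this to require a case distinction on the configuration of the topmost canonical pair of column $i$ below $(i,j)$ and, in the second alternative, a further appeal to minimality to rule out $(r,j)$ being a doubleton, a tripleton or a quadrupleton --- the same type of segment-balance contradiction already used to prove Lemmas~\ref{ImpConf1}--\ref{ImpConf3}.
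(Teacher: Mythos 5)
Your overall skeleton matches the paper's: use the count $N_{(i,j)}=P_{(i,j)}+2$ from Lemma~\ref{MinimalPairs} to force the existence of a suitable positive pair in column $i$, then argue that in the $\hdoub$-alternative the matching pair in row $j$ must still be a singleton. The paper gets the first step more cleanly than you do: it applies Lemma~\ref{Combinatorial1} to the sub-vector $(x_i,\dots,x_j)$, which balances the hook formed by the column $v_{(i,j)}=\{(i,i),\dots,(i,j)\}\cap K_v$ against the row $h_{(i,j)}$, so the surplus of $2$ negatives in the row immediately yields a surplus of positives in the column and hence some $(i,j')_+$ in $\sing$- or $\hdoub$-configuration. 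Your ``drive the surplus down column $i$'' is the same idea, and I would accept it as essentially correct modulo invoking that lemma explicitly.

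The genuine gap is in your last step. You justify that $(r,j)$ (the paper's $(i',j)_+$) is a singleton by a counting argument: ``the two surplus negatives of $h_{(i,j)}$ have exhausted every positive of row $j$ between columns $r$ and $i$.'' Counting inside the interval $[r,i]$ of row $j$ cannot rule out $(r,j)_+$ being absorbed into a configuration whose other members lie outside that interval or outside row $j$ altogether --- in particular a $\vdoub$- or $\iquad$-configuration reaching down column $r$, or a $\trip$-/$\tquad$-configuration reaching a higher row. The paper closes exactly this hole not by counting but by eliminating each configuration for a specific structural reason: $\hdoub$, $\vdoub$ and $\iquad$ are impossible because of the order $\prec$ in which pairs are processed (nothing in row $j$ or above has been placed yet except by the current step); $\trip$ is impossible because Case 3 only occurs at an unstable level; and $\tquad$ is impossible by the minimality of $(i,j)_-$ among pairs for which Case 2 fails. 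Your proposal gestures at ``a case distinction on the configuration of the topmost canonical pair of column $i$'' but never identifies these three distinct reasons, and the segment-balance contradictions of Lemmas~\ref{ImpConf1}--\ref{ImpConf3} that you appeal to are not the right tool for the order-of-construction and unstable-level exclusions. As written, the decisive step of the lemma is asserted rather than proved.
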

\begin{proof}
Since  $(i,j)_-$ is the second smallest non ideal pair on the list $l_{(i,j)}$, then necessarily $l_{(i,j)}$ contains exactly two more negative pairs than positive pairs. Now applying lemma \ref{Combinatorial1} to the vector $(x_i,\dots,x_j)$ ensures that on the vertical list $v_{(i,j)}=\{(i,i),\dots,(i,j-1),(i,j-1)\}\cap K_{\vec{x}}$ there is exactly one more positive pair than negative pairs. Since all pairs on $v_{(i,j)}$ are contained in a partition, definition \ref{cases} ensures that there exists some $(i,j')_+ \in v_{(i,j)}$ contained either in $\sing$-configuration or in $\hdoub$-configuration with $(i',j')_-$. In the
first situation the proof is over, in the second situation we still need to verify that $(i',j)_+$ is in $\sing$-configuration. Let's consider all possible configurations

We have that $(i',j)_+$  cannot be in $\hdoub$, $\vdoub$ or $\iquad$-configuration due to the order in which the construction is performed. It is not in $\trip$-configuration, since we are at an unstable level. It is not in $\tquad$-configuration, since $(i,j)_-$ is the second smallest non ideal pair on $l_{(i,j)}$. The only option left is that $(i',j')_+$ is in $\sing$-configuration.
\end{proof}

\begin{figure}[h]
    \centering
    \vspace{-5mm}
    \begin{subfigure}[t]{0.45\textwidth}
    \begin{tikzpicture}
    \hspace{1.5cm}\matrix [matrix of math nodes] (m)
        {
            (i',j)_+ & \hdots  & (i,j)_{-}   \\
            \vdots& & \vdots\\
            (i',j')_{-}&\hdots & (i,j')_{+} \\
        };   
        \draw[color=red][-] (m-1-1) edge[out=45,in=135] (m-1-3);
        \draw[color=red][-] (m-3-1) edge[out=-45,in=225] (m-3-3);
        \draw[color=red][-] (m-3-1) edge[out=135,in=225] (m-1-1);
        \draw[color=red][-] (m-3-3) edge[out=45,in=-45] (m-1-3);
\end{tikzpicture} \caption{Illustration for lemma \ref{Case3Possible}.}   
    \end{subfigure}
    \hfill \hfill
    \begin{subfigure}[t]{0.45\textwidth}
        \centering
    \begin{tikzpicture}
    \matrix [matrix of math nodes] (m)
        {
            (i',j)_+ & \hdots  & (i,j)_{-} & \hdots & (i_1,j)_-  \\
            \vdots& & \vdots & & \vdots \\
            (i',j_1)_{-}&\hdots & (i,j_1)_{+} & \hdots & (i_1,j_1)_+ \\
        };    
        \draw[color=red][-] (m-1-1) edge[out=45,in=135] (m-1-3);
        \draw[color=red][-] (m-3-1) edge[out=-45,in=225] (m-3-3);
        \draw[color=red][-] (m-3-1) edge[out=135,in=225] (m-1-1);
        \draw[color=red][-] (m-3-3) edge[out=45,in=-45] (m-1-3);
        \draw[color=red][-] (m-3-5) edge[out=45,in=-45] (m-1-5);
\end{tikzpicture}    \caption{Illustration for lemma \ref{Case4Possible}.}
    \end{subfigure}
    \caption{Illustrations for lemmas \ref{Case3Possible} and \ref{Case4Possible} using operation 2.}
\end{figure}

\begin{lemma}[Case 4]\label{Case4Possible}
Suppose $(i,j)_-$ is a non ideal pair on the list $l_{(i,j)}$ in a unstable level, but not the smallest or second smallest on $l_{(i,j)}$. Let $(i_1,j)_-$ be the second smallest non ideal pair on the list $l_{(i,j)}$. From case 3, $(i_1,j)_-$ is in $\nvdoub$ with some pair $(i_1,j_1)_+$. 
Then $(i,j_1)_+$ is either in $\sing$-configuration (So we can apply operation 1) or in $\hdoub$-configuration with some pair $(i',j_1)_-$ and $(i',j)_+$ is in $\sing$-configuration (So we can apply operation 2).
\end{lemma}
\begin{proof}
We first verify that $(i,j_1)_+$ is either in $\sing$-configuration or in $\hdoub$-configuration. lemma \ref{Strat} implies that $(i,j_1)_+$ cannot be in $\vdoub$ or $\iquad$-configuration while lemma \ref{ImpConf3} implies it cannot be in a $\tquad$ o $\trip$-configuration.

Let's prove that if it is $\hdoub$-configuration with some pair $(i',j_1)_-$ then $(i',j)_+$ is in $\sing$-configuration. We consider all possible cases. It cannot be in $\hdoub$, $\vdoub$, or $\iquad$-configuration due to the order in which the construction is performed.  It is not in $\trip$-configuration since we are at an unstable level. lemma \ref{Strat} implies that it is not in $\tquad$-configuration.
\end{proof}

\begin{lemma}[Case 5]\label{Case5Possible}
Suppose $(i,j)_-$ is the smallest non ideal pair on the list $l_{(i,j)}$ in a stable level. Then there is a pair $(i',i-1)_-$ contained in a $\nsing$-configuration and the pair $(i',j)_+$ is contained in a $\sing$-configuration (So that we can apply operation 4).
\end{lemma}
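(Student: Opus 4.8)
The plan is to isolate the one genuinely new point --- the presence in $\eta_{k-1}$ of the negative bad singleton $\{(i',i-1)\}$ --- and to dispatch everything else by arguments already set up. Since we are in Case 5, the pair $(i,j)_-$ is the minimal negative pair of $l_{(i,j)}$ for which Case 1 failed, so the first bullet of Lemma \ref{MinimalPairs} gives $N_{(i,j)}=P_{(i,j)}$: the horizontal line $h_{(i,j)}$ is balanced between the two signs. I would combine this balance with Lemma \ref{Combinatorial1} (applied to appropriate sub-vectors of $v$) and with the hypothesis that we are at a stable level. The stable hypothesis says exactly that $(i,j)_-$ should not \emph{open} a new pairing --- it is operation $3$ that opens one, recording it as an $\nsing$-singleton, and operation $3$ fires only at unstable levels, once per column --- so $(i,j)_-$ must instead \emph{complete} a pairing opened at an earlier step; the concatenation identity $\s(a_{v(i',i-1)})\cdot\s(a_{v(i,j)})=\s(a_{v(i',j)})$ on which operation $4$ relies then forces that open pairing to live in column $i-1$. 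The counting, together with Lemma \ref{Strat} (which arranges the opened-but-not-yet-completed pairings into a nested family, so that the one associated with column $i-1$ is still present at step $k$), produces the surviving $\nsing$-singleton $\{(i',i-1)\}$ with $i'<i$; it is unambiguous because operation $3$ can fire on column $i-1$ at most once.

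Everything else is short. The product-sign identity $\s(a_{v(i',j)})=\s(a_{v(i',i-1)})\cdot\s(a_{v(i,j)})=(-1)(-1)=1$ gives the sign, and the parities $i'+i$ odd (as $(i',i-1)\in K_v$ is negative) and $i+j$ even (as $(i,j)\in K_v$ is negative) give $i'+j$ odd, whence $\s(a_{v(i',j)})=(-1)^{i'+j+1}=1$; so $(i',j)_+$ is a positive canonical pair. That $(i',j)_+$ sits in a $\sing$-configuration is immediate: since $i'<i$, the pair $(i',j)$ has not yet been reached in column $j$; every negative pair of column $j$ processed before $(i,j)$ was handled by Case 1, hence by a horizontal operation $1$, and such an operation consumes a positive singleton of column $j$ strictly to the right of $(i,j)$, which $(i',j)$ is not since $i'<i$; moreover, no operation performed so far at a column other than $j$ involves a pair of column $j$ at all. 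Hence $(i',j)$ is still the singleton it was in $\eta_0$. With $\{(i',i-1)\}$ and $\{(i',j)\}$ both singletons and of the required signs, operation $4$ applies, which is the assertion of the lemma.

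The real obstacle is the existence claim of the first paragraph: pinning down column $i-1$ and certifying that its $\nsing$-singleton has not already been absorbed by an earlier operation $4$. The natural way to make this rigorous is to carry, through the induction defining the $\eta_k$, the bookkeeping invariant that the number of $\nsing$-singletons surviving in a given column equals the number of unstable transitions that took place at that column minus the number of applications of operation $4$ that have already consumed a singleton from it; the stable-level hypothesis at column $j$, together with $N_{(i,j)}=P_{(i,j)}$, is precisely what forces this quantity to be positive for column $i-1$ at the present step. Once the invariant is in place, the rest is the routine verification carried out above.
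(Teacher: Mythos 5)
Your reduction of the lemma to a single existence claim is accurate, and the parts you actually carry out are sound: the parity computation showing that $(i',j)$ is a positive canonical pair is correct, and your argument that $(i',j)$ is still a singleton in $\eta_{k-1}$ (operations performed while processing lower rows never touch row $j$, and every negative pair of row $j$ to the right of $(i,j)$ was handled by Case 1, which only consumes positive singletons to its own right) is, if anything, more explicit than the paper's ``by considering all possible partitions for $(i',j)_+$.'' But the central claim --- that $\eta_{k-1}$ contains a surviving $\nsing$-singleton $\{(i',i-1)\}$ --- is the actual content of the lemma, and your proposal does not prove it. Saying that $(i,j)_-$ ``must complete a pairing opened earlier'' because the level is stable begs the question: nothing in the construction guarantees a priori that \emph{some} operation is applicable to $(i,j)_-$; establishing that is precisely what the lemma is for. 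Likewise, the ``bookkeeping invariant'' you propose is only named, not established, and you give no argument for why the stable-level hypothesis together with $N_{(i,j)}=P_{(i,j)}$ forces the relevant count to be positive for row $i-1$ specifically rather than for some other row.

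The paper closes this gap with a concrete computation that your sketch is missing. Writing $(a,b)$ for the numbers of positive and negative pairs on line $i-2$, it tracks how these counts propagate to line $i-1$ (becoming $(a+1,b)$ or $(b,a+1)$ according to the sign of $(i-1,i-1)$) and then, using $N_{(i,j)}=P_{(i,j)}$ from Lemma \ref{MinimalPairs}, to line $j$; comparing the resulting minima shows that stability of level $j$ at $(i,j)_-$ forces level $i-1$ to be \emph{unstable}. A further count then shows that operation 3 must in fact have been applied to some negative pair $(i',i-1)_-$ on line $i-1$ (otherwise $(i-1,i-1)$ would have to be positive, contradicting the instability just derived), and only at that point does Lemma \ref{Strat} enter, to certify that this $\nsing$-singleton has not since been absorbed by a later operation. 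Without this counting step your proof has a genuine hole at its load-bearing point; with it, your remaining verifications would complete the argument along essentially the paper's lines.
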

\begin{proof}
For each $1 \leq v \leq n$ and let $\alpha'_v$ be the number positive pairs on the sequence $\{(1,i)\}_{i=1}^{v}$ and let $\beta'_v=v-\alpha'_v$ the number of negative pairs. Similarly let $(\alpha_{v},\beta_{v})$ be the number of respectively positive and negative pairs in the line $\{(1,v),\dots,(v,v)\}$ and let $p_v$ and $m_v$ be the number of positive and negative variables $y_1,\dots,y_{v+1}$. It follows from the change of variables that $\min(\alpha'_v+1,\beta'_v)=\min(p_v,m_v)=\min(\alpha_v+1,\beta_v)$. It is then enough to study the couples $(\alpha_v,\beta_v)$ to determine which levels are stable and which are not. 
\vspace{-5mm}
\begin{figure}[h!] 
    \centering
    \begin{tikzpicture}
        \matrix [matrix of math nodes] (m)
        {
          &   & \color{red}{I_2} &   &   &   & \color{red}{I_1}  &    \\
        (1,j) & \hdots &\hdots & (i-1,j) & (i,j)_- & \hdots & (j-1,j)  & (j,j)_+   \\
        (1,j-1) & & & & \vdots &   &  (j-1,j-1)   & \\
        \vdots & & & & \vdots  & \iddots &    & \\
        (1,i) &\hdots & (i-2,i) &  (i-1,i)  & (i,i) &    &    & \\
        (1,i-1) &\hdots & (i-2,i-1) & (i-1,i-1)  & &    &    & \\
        (1,i-2) &\hdots & (i-2,i-2) &  \color{red}{I_3} &  &    &    & \\
        };  
        \draw[color=red] (m-6-1.south west) rectangle (m-6-4.north east);
        \draw[color=red] (m-2-1.south west) rectangle (m-2-4.north east);
        \draw[color=red] (m-2-6.south west) rectangle (m-2-8.north east);
\end{tikzpicture}
    \caption{Illustration of lemma \ref{Case5Possible}} \label{IlustrationCase5}
\end{figure}

We first compute $(\alpha_{i-2},\beta_{i-2})$, $(\alpha_{i-1},\beta_{i-1})$, $(\alpha_{j-1},\beta_{j-1})$ and $(\alpha_j,\beta_j)$. We then show that the level $i-1$ is unstable and that in contains a non ideal canonical pair in $\nsing$-configuration. We finally show that it is possible to apply operation 4. Our calculation depends on the sign of  $(i-1,i-1)$, we provide a proof for the positive case, the negative case being analogous. See figure \ref{IlustrationCase5}.

\begin{enumerate}
    \item Suppose that $(i-1,i-1)_+$ is positive. Then $(\alpha_{i-1},\beta_{i-1})=(\alpha_{i-2}+1,\beta_{i-2})$. Since $(i,j)_-$ is the smallest non ideal pair on this level, lemma \ref{MinimalPairs} implies that in the segment $I_1=\{(i+1,j),\dots,(j,j)\}$ there is the same number $r$ of negative and positive pairs. Since $(i,j)_-$ is negative, By comparing the segment $I_2=\{(1,j),\dots,(i-1,j)\}$ with the segment $I_3=\{(1,i-1),\dots,(i-1,i-1)\}$,we conclude that $I_2$ contains exactly $\beta_{i-1}$ positive pairs and $\alpha_{i-1}$ negative pairs. We conclude that $(\alpha_j,\beta_j)=(\beta_{i-2}+r,\alpha_{i-2}+r+2)$.  Finally, if $i=j$ then $(\alpha_{j-1},\beta_{j-1})=(\alpha_{i-2}+1,\beta_{i-2})$, if $i<j$, then necessarily $(j,j)_+$ is a positive pair. If it was negative it would contradict the fact that $(i,j)_-$ is the smallest non ideal on this level. Since $(j,j)_+$ we know that the sign of each $(i'',j-1)$ is the same as the sign of $(i'',j)$ for each $1\leq i'' \leq j-1$, hence $(\alpha_{j-1},\beta_{j-1})=(\alpha_{j}-1,\beta_{j})=(\beta_{i-2}+r-1,\alpha_{i-2}+r+2)$.
    \item Hence, the stability of level $j$ is translated into the identity $\min(\alpha_{i-2}+2,\beta_{i-2})=\min(\alpha_{i-2}+2,\beta_{i-2}+1)$. This last identity implies that $\beta_{i-2}\geq \alpha_{i-2}+2$. Hence $\min(\alpha_{i-2}+1,\beta_{i-2})=\alpha_{i-2}+1<\alpha_{i-2}+2=\min(\alpha_{i-2}+2,\beta_{i-2})$, hence the level $i-1$ is unstable. 
    Moreover, if the number of positive canonical pairs on $\{(1,i-2),\dots,(i-2,i-2)\}$ is greater than or equal to the number of negative canonical pairs, lemma \ref{MinimalPairs} ensures that $\alpha_{i-2}+1\geq \beta_{i-2}$, which contradicts $\beta_{i-2}\geq \alpha_{i-2}+2$. This proves that there is strictly more negative pairs than positive pairs on $\{(1,i-2),\dots,(i-2,i-2)\}$, in particular there must exist a negative canonical pair $(i',i-1)_-$ for which we applied operation 3. lemma \ref{ImpConf1} ensures that $(i',i-1)_-$ is not contained in $\dtrip$-configuration and hence $(i',i-1)_-$ must be contained in $\nsing$-configuration. 
    \item Since the pairs $(i',i-1)_-$ and $(i,j)_-$ are negative, the pair $(i',j)_+$ must be positive. We finally need to verify that it is contained in  $\sing$-configuration. It cannot be in $\hdoub$, $\vdoub$, or $\iquad$-configuration due to the order in which the construction is performed.  It is not in $\trip$-configuration or in a  $\tquad$-configuration since $(i,j)_-$ is the smallest non ideal pair on $l_{(i,j)}$. \qedhere
\end{enumerate}
\end{proof}

\begin{lemma}[Case 6]\label{Case6Possible}
Suppose $(i,j)_-$ is a non ideal pair on the list $l_{(i,j)}$ in a stable level, but not the smallest on $l_{(i,j)}$. Let $(i_1,j)_-$ be the smallest non ideal pair on the list $l_{(i,j)}$. From case 5, $(i_1,j)_-$ is contained in a $\utrip$-configuration with some negative pair $(i',i_1-1)_-$. Then $(i,i_1-1)_+$ is either in $\sing$-configuration (So we can apply operation 1) or in $\hdoub$-configuration with some pair $(i',i_1-1)_-$ and $(i',j)_+$ is in $\sing$-configuration (So we can apply operation 2).
\end{lemma}
\begin{proof}
The proof is analog to the proof of lemma \ref{Case4Possible}.
\end{proof}

\section{Proofs of technical lemmas}\label{SectionTechnicalLemmas}

In this section we complete the proofs of the technical lemmas describing the structure of the partition. 

\begin{figure}[h!] 
    \centering
    \vspace{-4mm}
    \begin{tikzpicture}
        \matrix [matrix of math nodes] (m)
        {
            a_{\vec{x}'(1,n+2)} &a_{\vec{x}'(2,n+2)} &\hdots &a_{\vec{x}'(k,n+2)} &a_{\vec{x}'(k+1,n+2)} &a_{\vec{x}'(k+2,n+2)} &\hdots  &a_{\vec{x}'(n+2,n+2)}  \\
            \vdots& \vdots & &\vdots  &\vdots &\vdots & \iddots & \\
            a_{\vec{x}'(1,k+2)} &a_{\vec{x}'(2,k+2)} &\hdots &a_{\vec{x}'(k,k+2)} &a_{\vec{x}'(k+1,k+2)} & e_2 & &  \\    
            a_{\vec{x}'(1,k+1)} &a_{\vec{x}'(2,k+1)} &\hdots &a_{\vec{x}'(k,k+1)} & e_1 & & & \\      
            a_{\vec{x}'(1,k)} &a_{\vec{x}'(2,k)} &\hdots &a_{\vec{x}'(k,k)} & & & & \\     
            \vdots &\vdots &\iddots & & & & & \\    
            a_{\vec{x}'(1,2)} &a_{\vec{x}'(2,2)} & & & & & & \\     
            a_{\vec{x}'(1,1)} & & & & & & & \\      
        };  
        \draw[color=red] (m-4-1.south west) rectangle (m-3-1.north east);
        \draw[color=red] (m-1-5.south west) rectangle (m-1-6.north east);
\end{tikzpicture}
    \vspace{-2mm}
    \caption{Elements for which we must inspect the product sign.}
    \label{figure3}
\end{figure}
\begin{proof}[Proof of lemma \ref{Combinatorial1}]
Note that we just need to inspect the vertical and horizontal edges of the triangle in Figure \ref{figure3}. We proceed by induction on $n$, the case $n=1$ being trivial. If  the sequence of signs of the $x_k$ is $(-,+,-,\ldots,+,-)$, in which case necessarily $n\equiv1$ (mod 4), the lemma is a straight-forward calculation that does not even require the inductive assumption. For example, in the case $n=1$, $b_{-1}(\vec{x})=1$ and $b_{1}(\vec{x})=0$. If $\vec{x}$ does not have this sign pattern, then the sequence of signs must necessarily contain consecutive $++$ or $--$.  The inductive step amounts to the following 
\vskip0.3cm
\noindent {\it{Claim}}. If $b_{1}(\vec{x})+1=b_{-1}(\vec{x})$ for some $\vec{x} \in \big([-1,1]\backslash\{0\}\big)^n$, then $b_{1}(\vec{x}')=b_{-1}(\vec{x}')$ for $\vec{x}':=(x_1,\dots,x_k,e_1$ $,e_2,x_{k+1},\dots,x_n) \in \big([-1,1]\backslash\{0\}\big)^{n+2}$ where $\sign(e_1)=\sign(e_2)$ and $0 \leq k \leq n$.

To prove this, note that if $j\leq k$ then $a_{\vec{x}'(i,j)}=a_{\vec{x}(i,j)}$. Also, if $i \geq k+3$ then $a_{\vec{x}'(i,j)}=a_{\vec{x}(i-2,j-2)}$. Thus in both cases $s(a_{\vec{x}'(i,j)})=s(a_{\vec{x}(i,j)})$ and $s(a_{\vec{x}'(i,j)})=s(a_{\vec{x}(i-2,j-2)})$. Moreover, if $i \leq k$ and $j \geq k+3$, then $s(a_{\vec{x}'(i,j)})= s(a_{\vec{x}(i,j-2)})$. So we just need to inspect the elements $a_{\vec{x}'(1,k+1)}$, $a_{\vec{x}'(1,k+2)}$, $a_{\vec{x}'(k+1,n+2)}$ and $a_{\vec{x}'(k+2,n+2)}$.

If $\sign(e_1)=1$, then $s(a_{\vec{x}'(1,k+1)})= s(a_{\vec{x}'(1,k+2)})$ and therefore $a_{\vec{x}'(1,k+1)} \in K_{\vec{x}}$ if and only if $a_{\vec{x}'(1,k+2)} \not \in K_{\vec{x}}$. The same statement holds for $a_{\vec{x}'(k+1,n+2)}$ and $a_{\vec{x}'(k+2,n+2)}$. Also, since the number of $\{x_i\}_{i=1}^n$ with $\sign(x_i)=-1$ is odd, then $\sign(\prod_{i=1}^k x_i)=-\sign(\prod_{i=k+1}^{n} x_i)$. This implies that $\s(a_{\vec{x}'(1,k+1)}) \neq s(a_{\vec{x}'(k+1,n+2)})$, so we have $b_{1}(\vec{x}')+1=b_{-1}(\vec{x}')$. 

If $\sign(e_1)=-1$, then $\s(a_{\vec{x}'(1,k+1)}) \neq  \s(a_{\vec{x}'(1,k+2)})$ and therefore $a_{\vec{x}'(1,k+1)} \in K_{\vec{x}}$ if and only if $a_{\vec{x}'(1,k+2)} \in K_{\vec{x}}$. The same statement holds for $a_{\vec{x}'(k+1,n+2)}$ and $a_{\vec{x}'(k+2,n+2)}$. So we have $b_{1}(\vec{x}')+1=b_{-1}(\vec{x}')$.
\end{proof}
\begin{proof}[Proof of lemma \ref{MinimalPairs}]
The second statement proof is contained in the first statement proof. 

\noindent {\it{Claim 1}}. The number of positive canonical pairs in $h_{(i,j)}$ is equal to the number of negative canonical pairs in $h_{(i,j)}$.\\
\textit{Proof of claim 1:} If the number of positive canonical pairs is strictly larger, then $(i,j)_-$ would be ideal. If the number of negative canonical pairs is strictly greater, then $(i,j)_-$ wouldn't be a smallest non ideal $l_{(i,j)}$ .\\
\noindent {\it{Claim 2}}. Given any two adjacent pairs, Either both are canonical of opposite sign, non-canonical of opposite sign, or have the same sign but only one is canonical. \\
\textit{Proof of claim 2:} This can be verified from the definition of canonical and non-canonical. 

\begin{figure}[H] 
    \centering
    \vspace{-2mm}
    \begin{tikzpicture}
        \matrix [matrix of math nodes] (m)
        {
            (i,j)_- & (i+1,j) &\dots & (i_r-1,j) & (i_r,j) & \dots & (i_1,j) & (i_1+1,j) & \dots & (j,j)   \\
              &   & \textcolor{red}{I_r} &   &   &   &   &   & \textcolor{red}{I_0} &     \\
        };  
        \draw[color=red] (m-1-2.south west) rectangle (m-1-4.north east);
        \draw[color=red] (m-1-8.south west) rectangle (m-1-10.north east);
\end{tikzpicture}
    \vspace{-5mm}
    \caption{Sequence of intervals $\{I_s\}_{s=0}^r$.}
    \label{figurerepcounting}
\end{figure}

We need to verify now that the number of negative non-canonical pairs is equal to the number of positive non-canonical pairs. To this end, let $i_1,i_2,\dots,i_r$ be the coordinates corresponding to the canonical pairs on $h_{(i,j)}$, fix $i_{r+1}=i$. We study the interval of consecutive non-canonical pairs defined by $I_s=\{(i_{s+1}+1,j),\dots,(i_{s}-1,j)\}$ for $s=1,\dots,r$ and $I_0=\{(i_1+1,j),\dots,(j,j)\}$. Remark that these may be empty. See figure \ref{figurerepcounting} for a representation. 

We want now to count the number of positive and negative pairs on each interval $I_s$. There are three cases to consider.

\begin{itemize}
    \item If $(i_{s+1},j)$ and $(i_{s},j)$ have opposite sign, then the interval $I_s$ has even length and claim 2 implies that it contains the same number of positive pairs and negative pairs.
    \item If  $(i_{s+1},j)$ and $(i_{s},j)$ both are positive, then $I_s$ has odd length and claim 2 implies that it contains exactly one more positive pair than negative pairs. 
    \item If  $(i_{s+1},j)$ and $(i_{s},j)$ both are negative,  then $I_s$ has odd length and  claim 2 implies that it contains exactly one more negative pair than positive pairs. 
\end{itemize}

In addition, $I_0$ has always even length and then contains the same number of positive pairs and negative pairs. We can separate the intervals $I_s$ in two groups. When contained between two positive canonical pairs then the interval has a surplus of exactly one positive non-canonical pair. When contained between two negative canonical pairs then the interval has a surplus of exactly one negative non-canonical pair. Finally, claim 1 implies that the number of these pairs is the same. We conclude that $N_{(i,j)}=P_{(i,j)}$.
\end{proof}

\begin{proof}[Proof of lemma \ref{ImpConf1}]
    Let these smallest non ideal pairs be $(i,j)_-$ and $(i,j')_-$ with $j'<j$. As in the proof of lemma \ref{MinimalPairs}, in the segment $I_1=\{(i+1,j'),\dots(j',j')\}\cap K_v$ we have the same number of positive and negative canonical pairs. This implies that in the segment $I_2=\{(i+1,j),\dots,(j',j)\}\cap K_v$ we also have the same number of positive and negative canonical pairs. 
\begin{figure}[h!] 
    \centering
    \vspace{-6mm}
    \begin{tikzpicture}
        \matrix [matrix of math nodes] (m)
        {
          &   & \color{red}{I_2} &   &   &   &    &    \\
        (i,j)_- & (i+1,j) &\hdots & (j',j) & (j'+1,j)_+ & \hdots & (j,j)   \\
        \vdots &           &      &        &  \vdots & \iddots\\
        (i,j'+1) &     \hdots      & \hdots     &   \hdots     &  (j'+1,j'+1) \\
        (i,j')_- & (i+1,j') & \hdots & (j',j') &  \color{red}{I_1} & &    & \\
        };  
        \draw[color=red] (m-2-2.south west) rectangle (m-2-4.north east);
        \draw[color=red] (m-5-2.south west) rectangle (m-5-4.north east);
\end{tikzpicture}
    \caption{Illustration of lemma \ref{ImpConf1}} \label{Helper1}
\end{figure}
    
    Suppose $(j'+1,j'+1)$ is negative. Then it is canonical, necessarily $(i,j'+1)_+$ is positive and finally using that $(i,j'+1)_+$ is canonical positive, that $(j'+1,j'+1)_-$ and $(i,j)_-$ are canonical negatives we conclude that $(j'+1,j)_+$ is canonical positive. 
    
    Suppose that  $(j'+1,j'+1)$ is positive. Then it is non-canonical and we know from the proof of lemma \ref{MinimalPairs} that $(j',j')$ is positive non-canonical, and hence $(j',j'+1)_+$ is positive canonical. Since $(j',j')$ is positive non-canonical, $(j'+1,j'+1)_-$ and $(i,j)_-$ are negatives canonical, then $(j',j)_+$ is positive non-canonical. Finally, since $(j',j'+1)_+$ is positive canonical and $(j',j)_+$, $(j'+1,j'+1)_+$ are positive non-canonical, we conclude that $(j'+1,j)_+$ is canonical positive.
    
    In both cases, we have that $(j',j)_+$ is canonical positive. Hence, in the extended segment $I_2\cup\{(j'+1,j)\}$ the number of positive canonical pairs is strictly greater than the number of negative canonical pairs. In particular there must be a pair in $\sing$-configuration in the extended segment, this contradicts the fact that $(i,j)_-$ is the smallest non ideal on $l_{(i,j)}$.
\end{proof}

\begin{lemma}\label{ImpConf2}
Given $P_k$ a step in the construction of $\eta_{\vec{x}}$. Suppose either the level $j$ is unstable and $(i,j)_-$ is the second smallest non ideal on $l_{(i,j)}$ or that the level $j$ is stable and $(i,j)_-$ is the smallest non ideal on $l_{(i,j)}$ (\ie We are in case 3 or 5), then there cannot exist a negative pair $(i,j')_-$ in $\nhdoub$-configuration with $j'<j$.
\end{lemma}
\begin{proof}[Proof of lemma \ref{ImpConf2}]
Suppose that such a negative pair exists and $(i,j')_-$ is in $\nhdoub$-configuration with $(i',j')_+$, $i<i'$. Then $(i',j)_+$ is positive canonical. Because $(i,j)_-$ is the smallest (or second smallest) non ideal pair, $(i',j)_+$ is necessarily in $\hdoub$-configuration with some  negative pair $(i'',j)_-$ with $i<i''<i'$ (see figure \ref{Helper2}).

Since $(i,j')_-$ is in $\nhdoub$-configuration with $(i',j')_+$, this partition must have been constructed by means of Case 1 of the construction. In particular in the segment $I_1=\{(i+1,j'),(i+2,j'),\dots,(i'-1,j')\}\cap K_v$ the number of positive pairs is greater than or equal to the number of negatives pairs. This implies that the number of positive pairs is greater than or equal to the number of negatives in the segment $I_2=\{(i+1,j),(i+2,j),\dots,(i'-1,j)\}\cap K_v$. Finally, the negative pair $(i'',j)_-$ is contained in a partition with the positive pair $(i',j)_+$ not contained in $I_2$. Hence, there either exists a positive pair in $\sing$-configuration in the segment $I_2$, which contradicts the fact that $(i,j)_-$ is non ideal, or there exist a pair in $\trip$ or $\tquad$-configuration on $I_2$, which contradicts the fact that $(i,j)_-$ is the smallest (or second smallest) non ideal on $l_{(i,j)}$.
\end{proof}

\begin{figure}[h!] 
    \centering
    \vspace{-7mm}
    \begin{tikzpicture}
        \matrix [matrix of math nodes] (m)
        {
          &   & &  \color{red}{I_2}   &   &   &    &    \\
        (i,j)_- & (i+1,j) & \hdots & (i'',j)_- & \hdots & (i'-1,j) & (i',j)_+     \\
        \vdots &           &  & &    &        &  \vdots &  \\
        (i,j')_- & (i+1,j') &     \hdots      & \hdots     &   \hdots & (i'-1,j')     &  (i',j')_+ \\
          &   &  &   \color{red}{I_1}  &  & &    & \\
        };  
        \draw[color=red] (m-2-2.south west) rectangle (m-2-6.north east);
        \draw[color=red] (m-4-2.south west) rectangle (m-4-6.north east);
\end{tikzpicture}
    \vspace{-5mm}
    \caption{Illustration of lemma \ref{ImpConf2}} \label{Helper2}
\end{figure}

\begin{proof}[Proof of lemma \ref{ImpConf3}]

Suppose that the positive positive pair $(i,j_1)_+$ is in the same partition as $(i',j_1)_-$, $i<i'$, this can happens in two possible ways, either $(i',j_1)_-$ is in $\utrip$ or $\nuquad$-configuration with $(i,j')_-$. In both cases we have that $i'-1<j'$ so we can consider the segment $I_1=\{(i+1,j'),(i+2,j'),\dots,(i'-1,j')\}\cap K_v$, On $I_1$ the number of positive pairs is greater than or equal to the number of negative pairs, hence we have the same property in $I_2=\{(i+1,j),(i+2,j),\dots,(i'-1,j)\}\cap K_v$. Notice that the positive pair $(i',j)_+$ cannot be in $\trip$ or $\tquad$ configuration, if it was, by lemma \ref{notpossible} (4) and (5), it would need to be on the same partition as $(i',j_1)$, which is not possible. It cannot be in $\sing$-configuration, since it contradicts the fact that $(i,j)_-$ is not ideal. We conclude that it must be in $\hdoub$-configuration with some negative pair $(i'',j)_-$ in the segment $I_2$.

\begin{figure}[h!] 
    \centering
    \vspace{-5mm}
    \begin{tikzpicture}
        \matrix [matrix of math nodes] (m)
        {
          &   & &  \color{red}{I_2}   &   &   &    &    \\
        (i,j)_- & (i+1,j) & \hdots & (i'',j)_- & \hdots & (i'-1,j) & (i',j)_+     \\
        \vdots &           &  & &    &        &  \vdots &  \\
        (i,j_1)_+ &  &        &     &   &      &  (i',j_1)_- \\
        \vdots &           &  & \color{red}{I_1} &    &        &   &  \\
        (i,j')_- & (i+1,j') & \hdots & \hdots & \hdots & (i'-1,j') &\\ 
        };  
        \draw[color=red] (m-2-2.south west) rectangle (m-2-6.north east);
        \draw[color=red] (m-6-2.south west) rectangle (m-6-6.north east);
\end{tikzpicture}
    \vspace{-3mm}
    \caption{Illustration of lemma \ref{ImpConf3}} \label{Helper3}
\end{figure}

The rest of the proof is similar to lemma \ref{ImpConf2}. There either exists a positive pair in $\sing$-configuration in the segment $I_2$, which contradicts the fact that $(i,j)_-$ is non ideal, or there exist a positive pair in the segment $I_2$ contained in the same partition with another negative pair $(i''',j)_-$, $i'<i'''$. Note that $(i''',j)_-$ is in $\utrip$ or $\nuquad$-configuration, in the first case we obtain a contradiction since  $(i',j_1)_-$ is in another $\nuquad$-configuration or in a $\utrip$-configuration, in the second case we obtain a contradiction with the fact that $(i',j_1)_-$ is non ideal. 
\end{proof}

\begin{proof}[Proof of lemma \ref{Strat}]

It is clear from the construction of the partition in Cases 4 and 6, that there exists at most one $j_1<j$ such that $h_j$ is connected to $h_{j_1}$ (see lemma \ref{notpossible} (4) and (5)). We prove the other claim by contradiction. Suppose we there exist $h_{j_2}$ and $h_{j'_2}$ connected to $h_j$ with $j_2>j_2'>j$. We are necessarily in one of the following four cases. We reduce each case to constructing a specific element and proving that this element cannot exist by considering the possible partitions it is contained in. See figure \ref{Caseslemma25} with a red box indicating the specific element we study. For cases 3 and 4, the graphical representation changes depending on the comparison between $i_3$ and $i$, but the proof remains the same. 

\begin{figure}[h]
    \centering
    \vspace{-4mm}
    \begin{subfigure}[t]{0.45\textwidth}
    \begin{tikzpicture}
    \hspace{1.5cm}
    \matrix [matrix of math nodes] (m)
        {
            & & (i,j_2)_-\\
            & &  \vdots\\
            (i',j_2')_-& \hdots & (i,j_2')_-\\
            \vdots& & \vdots\\
           (i',j)_+ & \hdots & (i,j)_+\\
        }; 
         \draw[color=red] (m-3-3.south west) rectangle (m-3-3.north east);
\end{tikzpicture} \caption{Case 2, $i'<i$, of lemma \ref{Strat}.}   
    \end{subfigure}
    \hfill \hfill
    \begin{subfigure}[t]{0.45\textwidth}
        \centering
    \begin{tikzpicture}
    \matrix [matrix of math nodes] (m)
        {
            (i,j_2)_-& \hdots & (i',j_2)_-\\
            \vdots& & \vdots \\
             \vdots &  &(i',j_2')_-\\
            \vdots& & \vdots\\
            (i,j)_+& \hdots & (i',j)_+\\
        };
        \draw[color=red] (m-1-3.south west) rectangle (m-1-3.north east);
\end{tikzpicture}    \caption{Case 2, $i<i'$, of lemma \ref{Strat}.}
    \end{subfigure}
    \vspace{0.2cm}

    \centering
    \begin{subfigure}[t]{0.45\textwidth}
        \centering
     \begin{tikzpicture}
    \matrix [matrix of math nodes] (m)
        {
            & & (i,j_2)_-\\
            & &  \vdots\\
            (i_3,j_2')_+& \hdots & (i,j_2')_-\\
            \vdots& & \vdots\\
           (i_3',j)_- & \hdots & (i,j)_+\\
        }; 
        \draw[color=red] (m-3-3.south west) rectangle (m-3-3.north east);
\end{tikzpicture}    \caption{Case 3, $i_3<i$, of lemma \ref{Strat}.}
    \end{subfigure}
    \hfill
   \begin{subfigure}[t]{0.45\textwidth}
    \centering
    \begin{tikzpicture}
    \matrix [matrix of math nodes] (m)
        {
            & & (i_3,j_2)_+ &\hdots & (i,j_2)_- \\
            & &  \vdots & & \vdots \\
            (i',j_2')_- & \hdots & (i_3,j_2')_+& \hdots & (i,j_2')_-\\
            \vdots& & \vdots & & \\
           (i',j)_+ & \hdots & (i_3,j)_- & & \\
        }; 
        \draw[color=red] (m-3-5.south west) rectangle (m-3-5.north east);
\end{tikzpicture} \caption{Case 4,  $i_3<i$, of lemma \ref{Strat}.}   
    \end{subfigure}
    
    \caption{Cases of lemma \ref{Strat}.} \label{Caseslemma25}
\end{figure}

\begin{enumerate}
    \item {\bf{Levels $j_2$ and $j_2'$ are stable.}} Suppose that $(i,j_2)_-$ is the smallest non ideal pair in $h_{j_2}$, it should be connected to a negative pair $(i_3,i-1)_-$ in $h_j$. If $(i',j_2')_-$ is the smallest non ideal pair in $h_{j_2'}$, it should be connected to $(i_3',i'-1)_-$ in $h_j$. We obtain $i-1=j=i'-1$. We conclude that $(i,j_2)_-$ and  $(i',j_2')_-$ are both the smallest non ideal pair on their respective levels and contained in the same column, this contradicts lemma \ref{ImpConf1}.
    
    \item {\bf{Levels $j_2$ and $j_2'$ are unstable.}} Suppose that $(i,j_2)_-$ is the second smallest non ideal pair in $h_{j_2}$, it should be connected to $(i,j)_+$. If $(i',j_2')_-$ is the second smallest non ideal pair in $h_{j_2'}$, it should be connected to $(i',j)_+$. 
    
    We work two cases. If $i>i'$, then consider the negative canonical pair $(i,j_2')_-$. Since $(i',j_2')_-$ is the second smallest non ideal pair on $h_{j_2'}$, we have that $(i,j_2')_-$ cannot be in $\nvdoub$ or $\nuquad$-configuration. It cannot be in $\utrip$-configuration, since we are at an unstable level. It cannot be in a $\dtrip$ or $\nsing$-configuration, since it contradicts lemma \ref{ImpConf1}. It cannot be in $\ndquad$ or $\nhdoub$-configuration, since it contradicts lemma \ref{ImpConf2}. If $i<i'$, we repeat the previous argument by considering the possible configurations for $(i',j_2)_-$. 
    \item {\bf{Level $j_2$ is unstable and level $j_2'$ is stable.}} Suppose that $(i,j_2)_-$ the second smallest non ideal pair in $h_{j_2}$, it should be connected to $(i,j)_+$. If $(i',j_2')_-$ is the smallest non ideal pair in $h_{j_2'}$, it should be connected to $(i_3',j)_-$. By considering all possible partitions that include $(i,j_2')_-$ we obtain a contradiction. 
    \item {\bf{Level $j_2$ is stable and level $j_2'$ is unstable.}} Suppose that $(i,j_2)_-$ is the smallest non ideal pair in $h_{j_2}$, it should be connected to a negative pair $(i_3,j)_-$. If $(i',j_2')_-$ the second smallest non ideal pair in $h_{j_2'}$, it should be connected to $(i',j)_+$.  Since $i-1=j<j_2$, the pair $(i,j_2')_-$ exists, by considering all possible partitions that include it we obtain a contradiction. \qedhere
\end{enumerate} 
\end{proof}

\section{Further Remarks}

\begin{lemma}\label{inductive}
Let $\displaystyle P_n(y_1,\dots,\hat{y}_k,\dots,y_n)=\prod_{\substack{i<j\\ j,j\neq k}}\big(1-\dfrac{y_i}{y_{j}}\big)$, then
$$P_n(y_1,\dots,y_n)^{n-2}=\prod_{k=1}^n P_n(y_1,\dots,\hat{y}_k,\dots,y_n).$$
\end{lemma}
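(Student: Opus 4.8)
The plan is to treat this as a purely combinatorial multiplicity count, comparing the two sides factor by factor. Recall that $P_n(y_1,\dots,y_n)=\prod_{1\le i<j\le n}\big(1-y_i/y_j\big)$ is a product indexed by the $\binom{n}{2}$ two-element subsets $\{i,j\}\subseteq\{1,\dots,n\}$, each contributing exactly one factor $\big(1-y_i/y_j\big)$; likewise $P_n(y_1,\dots,\hat y_k,\dots,y_n)$ is the analogous product over the two-element subsets of $\{1,\dots,n\}\setminus\{k\}$. So I would fix a pair $\{a,b\}$ with $a<b$ and ask in how many of the $n$ deleted products $P_n(y_1,\dots,\hat y_k,\dots,y_n)$ the factor $\big(1-y_a/y_b\big)$ occurs: it occurs precisely when $k\notin\{a,b\}$, i.e.\ for exactly $n-2$ values of $k$.

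Carrying this out, after interchanging the product over $k$ with the product over pairs,
$$
\prod_{k=1}^n P_n(y_1,\dots,\hat y_k,\dots,y_n)
=\prod_{k=1}^n\ \prod_{\substack{1\le a<b\le n\\ a,b\neq k}}\Big(1-\frac{y_a}{y_b}\Big)
=\prod_{1\le a<b\le n}\Big(1-\frac{y_a}{y_b}\Big)^{\#\{k\,:\,k\neq a,\ k\neq b\}}
=\prod_{1\le a<b\le n}\Big(1-\frac{y_a}{y_b}\Big)^{n-2},
$$
and the right-hand side is exactly $P_n(y_1,\dots,y_n)^{n-2}$, which is the claimed identity.

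Since the argument is a direct multiplicity computation, there is no genuine obstacle here; the only points that need a moment's care are the bookkeeping of the index set of the deleted product (the condition "$i<j,\ i,j\neq k$" in the definition of $P_n(y_1,\dots,\hat y_k,\dots,y_n)$, each unordered pair contributing one factor) and the observation that the standing hypotheses $y_i\in\R^\ast$ with $|y_i|<|y_{i+1}|$ force $y_a\neq y_b$ for $a\neq b$, so every factor $\big(1-y_a/y_b\big)$ is a nonzero real and the displayed chain of equalities is an honest identity of real numbers rather than a merely formal one.
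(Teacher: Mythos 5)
Your multiplicity count is exactly the paper's argument: the paper's one-line proof is that each factor $(1-y_i/y_j)$ appears exactly $n-2$ times on each side, which is precisely the bookkeeping you carry out. The proposal is correct and takes essentially the same approach, just written out in more detail.
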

\begin{proof}
Note that each term $(1-y_i/y_j)$ appears $n-2$ times at each side of the equation.
\end{proof}

An iteration of lemma \ref{inductive} gives us the relation
$$P_n(y_1,\dots,y_n)^{\frac{(n-2)(n-3)}{2}}=\prod_{k<l} P_n(y_1,\dots,\hat{y}_k,\dots,\hat{y}_l,\dots,y_n)$$
we hope this relation gives insight into relation to Conjecture 3 in \cite[Page 16]{Ba}. In particular one would like to use lemma \ref{inductive} to obtain inductive proofs of our results, unfortunately, that is not the case. However, it simplifies the proof of Pohst's inequality to prove that a proof of the bound $P_n \leq 2^{\lfloor \frac{n}{2}\rfloor}$ for $n$ odd implies the bound for $n+1$.

Going back to the physical heuristic for this problem, if we have $n$ ordered particles in the positive real line, $y_1,\dots,y_n$, we expect the product $P_n$ to be maximal when putting together particles of opposite sign. Note that if $(1-y_i/y_j)$ is close to $2$ and there is some $i<k<j$ then we necessarily have a two particles $y_{k_1}$ and $y_{k_2}$, $i\leq k_1<k_2\leq j$ with equal sign and hence a factor $(1-y_{k_1}/y_{k_2})$ close to $0$. This motivates the following.

\textbf{Conjecture.}
\textit{Let $\{y_i\}_{i=1}^n \subset \R^\ast$ such that $|y_i|<|y_{i+1}|$ for $i=1,\dots,n-1$. Let $M$ be the maximal number of consecutive disjoint pairs $(y_i,y_{i+1})$ of opposite sign, then} $$P_n(y_1,\dots,y_n)=\prod_{1\leq i<j\leq n}\big(1-\dfrac{y_i}{y_{j}}\big)\leq 2^{M}.$$ 

For example if the sign configuration of the $\{y_i\}_{i=1}^n$ is $(+,+,-,-)$, then $M=1$. Note that if $p$ is the number of positive variables $y_i$ and $m=n-p$, then $M \leq \min(m,p)$.

\textbf{Open problem.} Despite the effort needed to construct a good partition, for a given sign configuration these are not unique: $(-,-,-)$ has $2$ such partitions. The question on how many of these partitions exists for each sign configuration remains open.

\section{Appendix}

Here we include an explicit example for $n=8$, for the coordinates of $\vec{x}$ having signs $(-,+,+,-,+,+,-,-)$. In this example levels $1$, $4$, $5$ and $7$ are unstable. Canonical elements are indicated in red with an index indicating its class $p \in \eta_{\vec{x}}$.

\begin{figure}[H]
    \centering
    \begin{subfigure}[t]{0.5\textwidth}
    \begin{tikzpicture}
    \matrix [matrix of math nodes] (m)
        {
            (1,8)_+^{{\color{red}{7}}} &(2,8)_-^{{\color{red}{7}}} &(3,8)_- &(4,8)_-^{{\color{red}{8}}} &(5,8)_+^{{\color{red}{8}}} &(6,8)_+ & (7,8)_+^{{\color{red}{6}}} &(8,8)_-^{{\color{red}{6}}}  \\
            (1,7)_-^{{\color{red}{7}}} &(2,7)_+^{{\color{red}{7}}} &(3,7)_+ &(4,7)_+^{{\color{red}{5}}} &(5,7)_-^{{\color{red}{5}}} &(6,7)_- & (7,7)_-^{{\color{red}{6}}}\\
            (1,6)_+^{{\color{red}{1}}} &(2,6)_-^{{\color{red}{1}}} &(3,6)_- &(4,6)_-^{{\color{red}{5}}} &(5,6)_+^{{\color{red}{5}}} &(6,6)_+ & &  \\    
            (1,5)_+ &(2,5)_- &(3,5)_-^{{\color{red}{4}}} &(4,5)_- &(5,5)_+ & & & \\      
            (1,4)_+^{{\color{red}{2}}} &(2,4)_-^{{\color{red}{2}}} &(3,4)_- &(4,4)_-^{{\color{red}{3}}} & & & & \\     
            (1,3)_-^{{\color{red}{2}}} &(2,3)_+^{{\color{red}{2}}} &(3,3)_+ & & & & & \\    
            (1,2)_- &(2,2)_+ & & & & & & \\     
            (1,1)_-^{{\color{red}{1}}}\\ 
        };
         \draw[color=red] (m-8-1) ellipse (6.3mm and 2.9mm);
         \draw[color=red] (m-6-1) ellipse (6.3mm and 2.9mm);
         \draw[color=red] (m-6-2) ellipse (6.3mm and 2.9mm);
         \draw[color=red] (m-5-1) ellipse (6.3mm and 2.9mm);
         \draw[color=red] (m-5-2) ellipse (6.3mm and 2.9mm);
         \draw[color=red] (m-5-4) ellipse (6.3mm and 2.9mm);
         \draw[color=red] (m-4-3) ellipse (6.3mm and 2.9mm);
         \draw[color=red] (m-3-1) ellipse (6.3mm and 2.9mm);
         \draw[color=red] (m-3-2) ellipse (6.3mm and 2.9mm);
         \draw[color=red] (m-3-4) ellipse (6.3mm and 2.9mm);
         \draw[color=red] (m-3-5) ellipse (6.3mm and 2.9mm);
         \draw[color=red] (m-2-1) ellipse (6.3mm and 2.9mm);
         \draw[color=red] (m-1-1) ellipse (6.3mm and 2.9mm);
         \draw[color=red] (m-2-2) ellipse (6.3mm and 2.9mm);
         \draw[color=red] (m-1-2) ellipse (6.3mm and 2.9mm);
         \draw[color=red] (m-2-4) ellipse (6.3mm and 2.9mm);
         \draw[color=red] (m-1-4) ellipse (6.3mm and 2.9mm);
         \draw[color=red] (m-2-5) ellipse (6.3mm and 2.9mm);
         \draw[color=red] (m-1-5) ellipse (6.3mm and 2.9mm);
         \draw[color=red] (m-2-7) ellipse (6.3mm and 2.9mm);
         \draw[color=red] (m-1-7) ellipse (6.3mm and 2.9mm);
         \draw[color=red] (m-1-8) ellipse (6.3mm and 2.9mm);
\end{tikzpicture} 
    \end{subfigure}
    \hfill \hfill
    \begin{subfigure}[t]{0.2\textwidth}
        \centering
        \vspace{-5.5cm}
        \hspace{1cm}
    \begin{tabular}{|c|c|c|}
 \hline
 Pair & cs. & op. \\ \hline
  (1,1) & 2 & 3 \\ \hline
  (1,3) & 1 & 1\\ \hline
  (4,4) & 2 & 3 \\ \hline
  (2,4) &3 & 2 \\ \hline
  (3,5) &2 & 3 \\ \hline
  (4,6) &1 & 1 \\ \hline
  (2,6) &5 & 4 \\ \hline
  (7,7)&2 & 3 \\ \hline
  (5,7) & 3 & 2 \\ \hline
  (1,7) & 1 & 1\\ \hline
  (8,8) &5 & 4 \\ \hline
  (4,8) &1 & 1 \\ \hline
  (2,8) & 6 & 2\\ \hline
\end{tabular}
    \end{subfigure}
    \caption{Example of partition $\eta_{\vec{x}}$ and negative pairs with corresponding case (cs.) and operation (op.) used.}
\end{figure}

In the following algorithm the variables $r$ and $t$ are static, that is, their value is preserved through different runs of the algorithm for a fixed sign configuration of $\vec{x}$.

\begin{breakablealgorithm}
\caption{Algorithm to construct $P_k$ from $P_{k-1}$.}
\begin{algorithmic}[1]
\If{$(i,j)_-$ is ideal} \Comment{Case 1}
    \State Let  $(i+l,j)_+$ maximal with order $ \rightarrow$ in $\sing$-configuration.
    \State Use Operation 1 with $(i+l,j)_+$.
\ElsIf{Level $j$ is unstable}
        \If{$(i,j)_-$ is the smallest non ideal on level $j$} \Comment{Case 2}
        \State Use Operation 3. 
        \ElsIf{$(i,j)_-$ is the second smallest non ideal on level $j$} \Comment{Case 3}
        \State Let $(i,j')_+$ a pair in $\sing$-configuration or in a $\hdoub$-configuration (lemma \ref{Case3Possible}).
        \State Record the coordinate $r=j'$.
            \If{$(i,j')_+$ in $\sing$-configuration}
            \State Use Operation $1$ with $(i,j')_+$.
            \ElsIf{$(i,j')_+$ in $\hdoub$-configuration}
            \State Use Operation $2$ with $(i,j')_+$.
            \EndIf
    \Else \Comment{Case 4}
    \State Let $r$ be the value recorded in Case 3.
        \If{$(i,r)_+$ in $\sing$-configuration}
        \State Use Operation $1$ with $(i,r)_+$.
        \ElsIf{$(i,r)_+$ in $\hdoub$-configuration}
        \State Use Operation $2$ with $(i,r)_+$.
        \EndIf
        \EndIf
\ElsIf{Level $j$ is stable}
    \If{$(i,j)_-$ is the smallest non ideal on level $j$} \Comment{Case 5}
    \State Let $(i',i-1)_-$ be a pair in $\nsing$-configuration (lemma \ref{Case5Possible}).
    \State Record the coordinate $t=i-1$.
    \State Use operation 4 with $(i',i-1)_-$ and $(i',j)_+$. 
    \Else \Comment{Case 6}
    \State Let $t$ be the value recorded in Case 5.
        \If{$(i,t)_+$ in $\sing$-configuration}
        \State Use Operation $1$ with $(i,t)_+$.
        \ElsIf{$(i,t)_+$ in $\hdoub$-configuration}
        \State Use Operation $2$ with $(i,t)_+$.
        \EndIf
\EndIf
\EndIf        
\end{algorithmic}
\end{breakablealgorithm}

We have found no example of Case $6$ where operation $1$ is applicable, similarly, the positive pair of line $8$ of the algorithm appear to be unique. Although not proved here, we believe these statements can be proven with the methods of this paper. 
\vspace{3mm}

\noindent\textbf{Acknowledgment} I am indebted to the anonymous referee for the detailed report, suggestions and corrections that significantly improved the presentation of this paper.


\begin{thebibliography}{XXX}

\bibitem[ADF]{ADF}   Astudillo S., Diaz y Diaz F., and Friedman, E.,  {\it{Sharp lower bounds for regulators of small-degree number fields}}, J.   Number Theory  {\bf{167}} (2016) 232--258.

\bibitem[Ba]{Ba} Battistoni F., {\it{A conjectural improvement for inequalities related to regulators of number fields}}, Boll Unione Mat. Ital. {\bf{14}} (2021) 609--627.

\bibitem[BM]{BM} Battistoni F., Molteni G., {\it{Generalization of a Pohst's inequality}}, J. Number Theory {\bf{228}} (2021) 73--86.

\bibitem[BM2]{BM2} Battistoni F., Molteni G., {\it{Generalized Pohst inequality and small regulators}}, Math. Comp.   {\bf{94}} (2025), 475--504.

\bibitem[Be]{Be} Bertin J., {\it{Sur une conjecture de Pohst}}, Acta Arith. {\bf{74}} (1996) 347--349.

\bibitem[Fr]{Fr} Friedman, E.,  {\it{Analytic formulas for the regulator of a number field}}, Invent.\ Math.\  {\bf{98}}  (1989)  599--622.

\bibitem[FR]{FR} Friedman E., Ramirez-Raposo G. {\it{Filling the gap in the table of smallest regulators up to degree 7}}, J. Number Theory {\bf{198}} (2019) 381--385.

\bibitem[Po]{Po} Pohst, M., {\it{Regulatorabsch\"atzungen f\"{u}r total reelle algebraische Zahlk\"orper}}, J. Number Theory {\bf{9}}    (1977)   459--492.

\bibitem[Ra]{Ra} Raposo G., {\it{A proof of Pohst's inequality}}, Funct. Approx. Comment. Math.   {\bf{70}} (2024), 71--83. 

\bibitem[Re]{Re} Remak R., {\it{\"Uber Gr\"ossenbeziehungen zwischen Diskriminante und Regulator eines algebraischen Zahlk\"orpers}}, Compos. Math. {\bf{10}} (1952) 245--285. This paper was published posthumously, see MacTutor \url{https://mathshistory.st-andrews.ac.uk/Biographies/Remak/}.

\bibitem[Sa]{Sa} SageMath, {\it{The Sage Mathematics Software System}}, (Version 10.4), 2024, https://www.sagemath.org.

\end{thebibliography}
\end{document}